\newtheorem{thm}{Theorem}[section]
\newtheorem{Lemma}[thm]{Lemma}
\newtheorem{cor}[thm]{Corollary}
\newtheorem{Remark}[thm]{Remark}
\newtheorem{Algorithm}[thm]{Algorithm}
\long\def\@makecaption#1#2{%
 \vskip\abovecaptionskip
  \sbox\@tempboxa{{#1.}\quad #2}%
 \ifdim \wd\@tempboxa >\hsize
    { #1.}\quad #2\par
     \else
  \global \@minipagefalse
   \hb@xt@\hsize{\hfil\box\@tempboxa\hfil}%
   \fi
   \vskip\belowcaptionskip}
\title{Computing the permanental polynomials of bipartite graphs by Pfaffian orientation\footnote{This work is supported by NSFC (grant no. 10831001).
       }}
\author{Heping Zhang\footnote{The corresponding author.}, Wei Li\\
\small{School of Mathematics and Statistics, Lanzhou
University, Lanzhou, Gansu 730000, P. R. China.}\\
\small{E-mail addresses: zhanghp@lzu.edu.cn, li$\_$w07@lzu.cn.
}}
\date{}
\begin{document}

\maketitle

\makeatletter
\newcommand{\rmnum}[1]{\romannumeral #1} 　　
\newcommand{\Rmnum}[1]{\expandafter\@slowromancap\romannumeral #1@}
\makeatother

\begin{abstract}
The permanental polynomial of a graph $G$ is
$\pi(G,x)\triangleq\mathrm{per}(xI-A(G))$.  From  the result that a
bipartite graph $G$ admits an orientation $G^e$ such that every
cycle is oddly oriented if and only if it contains no even
subdivision of $K_{2,3}$, Yan and Zhang showed that the permanental
polynomial of such a bipartite  graph $G$ can be expressed as the
characteristic polynomial of the  skew adjacency matrix $A(G^e)$. In
this paper we first prove that this equality holds only if the
bipartite graph $G$ contains no even subdivision of $K_{2,3}$. Then
we prove that such bipartite graphs are planar.  Further we mainly
show that a 2-connected  bipartite graph contains no even
subdivision of $K_{2,3}$ if and only if it is planar 1-cycle
resonant. This implies that  each cycle is oddly oriented in any
Pfaffian orientation of a 2-connected bipartite graph containing no
even subdivision of $K_{2,3}$. As applications, permanental
polynomials for some types of bipartite graphs are computed.

\medskip
\noindent {\em Key Words:}\quad Permanent; Permanental polynomial;
Determinant; 1-cycle resonant;
Pfaffian orientation.

\medskip
\noindent{\em AMS 2010 subject classification:} 05C31, 05C70, 05C30,
05C75
\end{abstract}

\section{Introduction}
\setlength{\unitlength}{1cm}

In this article, we always consider finite and simple graphs. Let
$G$ be a graph with vertex-set $V(G)=\{v_{1}, v_{2},\cdots,v_{n}\} $
and edge-set $E(G)=\{e_{1}, e_{2},\cdots, e_{m}\}$. The adjacency
matrix $A(G)=(a_{ij})_{n\times n}$ of $G$ is defined as
\begin{displaymath}
a_{ij}=\left\{
\begin{array}{ll}
 1 & \textrm{if vertex $v_{i}$ is adjacent to vertex $v_{j}$},\\
 0 & \textrm{otherwise}.
 \end{array}\right.
\end{displaymath}

The permanent of an $n\times n$ matrix $A$ is defined as\\
\begin{equation}
\mathrm{per(A)}=\sum_ {\sigma\in\Lambda_n}\prod_{i=1}^{n}a_{i\sigma(i)},
\end{equation}
where $\Lambda_{n}$ denotes the set of all permutations of
$\{1, 2,\cdots, n\}$. The permanental polynomial of $G$ is
defined as
\begin{equation}
\pi(G,x)=\mathrm{per}(xI-A(G))=\sum_{k=0}^{n}b_{k}x^{n-k},
\end{equation}
where $I$ is the identity matrix of order $n$. It is easy to see
that $(-1)^{k}b_{k}$ is the sum of the $k \times k$ principle
subpermanents of $A$ \cite{R.}. It was  mentioned in \cite{M., R.}
that if $G$ is a bipartite graph,  then $b_{2k}\geq 0$ and $
b_{2k+1}=0$ for all $k\geq 0$; In particular,
\begin{equation}b_n=\textrm{per}(A(G))=m^2(G),\end{equation}
where $m(G)$ is the number of
perfect matchings of $G$.

It was in 1981 that the permanental polynomial was firstly
investigated in chemistry in $\cite{Kasum.}$,   where some relations
between the permanental polynomial and the structure of conjugated
molecules were discussed. Later, Cash \cite{Pe, Ca} investigated the
mathematical properties of the coefficients and zeros of the
permanental polynomials of some chemical graphs. This suggests that
the permanental polynomial encodes a
 variety of structural information. Gutman and Cash  also demonstrated $\cite{I.}$ several relations between the coefficients of
 the permanental and characteristic polynomials.
 See \cite{Ca2, B, Huo.} for more about the permanental polynomials.
 We can compute the determinant of
 an $n\times n$ matrix efficiently by Gaussian elimination.
 Although  $\mathrm{per}(A)$ looks similar to
  $\mathrm{det}(A)$,  it is harder to be computed. Valiant proved  \cite{L.} that it is a
  $\#$P-complete problem to evaluate the permanent of a $(0, 1)$-matrix.
  For these reasons, we want to compute the permanental polynomials of graphs by the determinant of a matrix.

Pfaffian orientations of graphs can be used to the enumeration of
perfect matchings. Now we recall some definitions. A subgraph $H$ of
a graph $G$ is called \textit{nice} if $G- V(H)$ has a perfect
matching. Let $G^{e}$ be an orientation of a graph $G$ and $C$ a
cycle of even length in $G$. $C$ is \textit{oddly oriented} in
$G^{e}$ if  $C$ contains an odd number of edges that are directed in
either direction of the cycle. An orientation of $G$ is
\textit{Pfaffian} if every nice cycle $C$  is oddly oriented. The
skew adjacency matrix of $G^{e}$, denoted by $A(G^{e})$, is defined
as follows,
\begin{displaymath}
a_{ij}=\left\{
\begin{array}{lll}
 1 & \textrm{if $(v_{i},v_{j}) \in E(G^{e})$},\\
 -1 & \textrm{if $(v_{j},v_{i}) \in E(G^{e})$},\\
 0 & \textrm{otherwise.}
 \end{array}\right.
\end{displaymath}
If a bipartite graph $G$ has a Pfaffian orientation $G^e$, then \cite{Lo}
\begin{equation}\textrm{per}(A(G))=\textrm{det}(A(G^e))=m^2(G).\end{equation}

We say an edge $e$ of a graph $G$ is \textit{oddly subdivided} in a
graph $G^{\prime}$ if $G^{\prime}$ is obtained from $G$ by
subdividing $e$ an odd number of times. Otherwise, we say $e$ is
evenly subdivided. The graph $G^{\prime}$ is said to be \textit{an even
subdivision of a graph $G$} if $G'$ can be obtained from $G$ by
subdividing evenly each edge of  $G$.

Fisher and Little \cite{Fi} gave a characterization of a graph that
has an orientation with each cycle being oddly oriented. From this
result, we have  that a bipartite graph $G$ admits an orientation
$G^e$ such that every cycle is oddly oriented if and only if it
contains no even subdivision of $K_{2,3}$. Accordingly,  Yan and
Zhang showed \cite{W.} that the permanental polynomial of such a
bipartite graph $G$ can be expressed as the characteristic
polynomial of the skew adjacency matrix $A(G^e)$:
$\pi(G,x)=\mathrm{det}(xI-A(G^{e}))$.

In this paper we want to  investigate bipartite graphs for which the
permanental polynomials can be computed by the characteristic
polynomials. Following Yan and Zhang's  result, in Section 2 we
obtain that a bipartite graph admits this property if and only if it
has a Pfaffian orientation such that all the cycles
 are oddly oriented, if and only if it contains no
even subdivision of $K_{2,3}$.

In Section 3 we mainly  characterize bipartite graphs containing no
even subdivision of $K_{2,3}$.  The starting point of this section
is to show that these graphs are planar. Then we find that if such
graphs are 2-connected, they each has a bipartite ear decomposition
starting with any cycle. This implies that such graphs are
elementary bipartite graphs. Unexpectedly,  we obtain the  main
result that a 2-connected bipartite graph $G$ contains no even
subdivision of $K_{2,3}$ if and only if it is planar 1-cycle
resonant. In fact,  planar 1-cycle resonant graphs have already been
introduced and investigated extensively in \cite{Guo2, Guo1, Guo,
Xu}. Various characterizations \cite{Guo1} and constructional
features \cite{Guo} for planar 1-cycle resonant graphs have been
given. These characterizations enable ones to design efficient
algorithms to decide whether a 2-connected planar bipartite graph is
1-cycle resonant \cite{Guo, Xu}.

 From the previous main result,  we have that each cycle of
 a 2-connected bipartite graph $G$ containing  no
even subdivision of $K_{2,3}$ is nice and is oddly oriented in any
Pfaffian orientation of $G$. Finally an algorithm \cite{Lo} is
recalled  to give a Pfaffian orientation of a plane graph. As
applications, the permanental polynomials of some types of bipartite
graphs are computed. In particular, we obtain explicit expressions
for permanental polynomials of two classes of graphs $G_{1}^{s}$ and
$G_{2}^{r}$. \noindent
\section{A criterion for computing permanental polynomials}
An elegant characterization for Pfaffian bipartite graphs was given
by Little.

\begin{thm} \cite{Li} \label{Pfaffian}
  A bipartite graph admits a Pfaffian orientation if and only if it
does not contain an even subdivision of $K_{3,3}$ as a nice
subgraph.
\end{thm}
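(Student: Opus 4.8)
The plan is to prove the two implications separately, treating the ``only if'' direction as routine and reserving the main effort for the ``if'' direction. For necessity — a bipartite graph containing a nice even subdivision of $K_{3,3}$ has no Pfaffian orientation — I would first isolate an inheritance principle: if $G$ has a Pfaffian orientation $G^{e}$ and $H$ is a nice subgraph of $G$, then the orientation induced on $H$ is already Pfaffian. Indeed, if $C$ is a nice cycle of $H$ then $H-V(C)$ has a perfect matching $M_{1}$, and since $H$ is nice in $G$ the graph $G-V(H)$ has a perfect matching $M_{2}$; then $M_{1}\cup M_{2}$ is a perfect matching of $G-V(C)$, so $C$ is also a nice cycle of $G$ and is oddly oriented in $G^{e}$, hence oddly oriented in the induced orientation of $H$. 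Taking contrapositives, it suffices to prove that \emph{no} even subdivision of $K_{3,3}$ admits a Pfaffian orientation.

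For that I would establish two facts. First, $K_{3,3}$ itself is not Pfaffian: its biadjacency matrix is the all-ones $3\times 3$ matrix $J$, whose permanent is $6$, whereas every $\pm1$ signing of $J$ has determinant of absolute value at most $4$. Since a Pfaffian orientation of a bipartite graph would force $\det A(G^{e})=\mathrm{per}\,A(G)=m^{2}(G)$, i.e.\ the signed biadjacency determinant to have absolute value $m(G)=6$, no such orientation can exist. Second, subdividing an edge evenly does not change whether a bipartite graph is Pfaffian. The basic even step inserts two vertices $u',v'$ into an edge $uv$, producing a path $u\,u'\,v'\,v$; the perfect matchings of the new graph correspond bijectively to those of the old (either $u'v'$ is used, matching those containing $uv$, or $uu'$ and $v'v$ are used, matching those avoiding $uv$), and the nice cycles correspond likewise, so a Pfaffian orientation transfers in either direction by orienting the inserted path consistently. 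Iterating over all edges shows an even subdivision of $K_{3,3}$ is Pfaffian if and only if $K_{3,3}$ is, hence never; together with the inheritance principle this settles necessity.

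The substantial direction is sufficiency: a bipartite graph $G$ with no nice even subdivision of $K_{3,3}$ has a Pfaffian orientation. I would argue by induction on $|E(G)|$. First I would reduce to the matching-covered case, using that every edge of a nice cycle lies in a perfect matching (complete each of the two alternating edge-sets of the cycle by a perfect matching of the remainder), so an edge in no perfect matching lies in no nice cycle and may be oriented arbitrarily; one reduces similarly to $2$-connected $G$. On a $2$-connected matching-covered bipartite graph I would peel off a suitable ear (or a single removable edge) from a bipartite ear decomposition, chosen so that the smaller graph still has no nice even subdivision of $K_{3,3}$, orient it Pfaffian-ly by induction, and then orient the peeled ear so that every nice cycle through it becomes oddly oriented. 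The crux — and the step I expect to be the main obstacle — is this last extension: being Pfaffian is a global parity condition imposed simultaneously by \emph{all} nice cycles, and one must show these constraints are mutually consistent when phrased as a system over $\mathrm{GF}(2)$ in the orientation variables. The role of the hypothesis is precisely to exclude the obstruction to consistency: a pair of nice cycles whose parity requirements cannot be met together should, after contracting the parts matched outside them, assemble into a nice even subdivision of $K_{3,3}$, so the forbidden-subgraph assumption forces solvability. Making this assembly precise, and verifying that the reduction step preserves the forbidden-subgraph hypothesis, is where the real work lies.
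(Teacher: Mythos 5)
Your attempt cannot be measured against a proof in the paper, because the paper offers none: Theorem~\ref{Pfaffian} is quoted from Little's paper [Li] as a known result. Judged on its own, your necessity direction is essentially correct and complete in outline. The inheritance principle (a Pfaffian orientation of $G$ restricts to a Pfaffian orientation of any nice subgraph $H$, via $M_1\cup M_2$) is the standard argument and is right; the arithmetic fact that $\mathrm{per}(J_3)=6$ while every $\pm 1$ matrix of order $3$ has determinant divisible by $2^{2}=4$, hence $|\det|\le 4$, correctly shows $K_{3,3}$ is not Pfaffian; and the parity bookkeeping for inserting two vertices into an edge (forward-edge counts change by $0$ or $2$ on every cycle) does transfer Pfaffian orientations in both directions. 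One remark in your favor: the step you flag as needing verification --- that peeling an odd ear preserves the forbidden-subgraph hypothesis --- is actually automatic, since the internal vertices of an odd ear are even in number and can be matched along the ear, so any nice subgraph of the smaller graph remains nice in $G$.

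The genuine gap is the sufficiency direction, and you concede as much. The sentence ``orient the peeled ear so that every nice cycle through it becomes oddly oriented'' conceals the entire content of Little's theorem: each added ear contributes only one free parity bit, while the nice cycles through it impose many simultaneous constraints over $\mathrm{GF}(2)$, and the whole difficulty is to show that an inconsistent pair (or family) of constraints forces a \emph{nice} even subdivision of $K_{3,3}$ in $G$ itself. Your proposed mechanism --- ``after contracting the parts matched outside them, assemble into a nice even subdivision of $K_{3,3}$'' --- is not an argument: contraction need not preserve bipartiteness, evenness of the subdivision, or niceness, and no construction is given showing two conflicting nice cycles yield the forbidden configuration rather than, say, an even subdivision of $K_{2,3}$ or a non-nice copy of $K_{3,3}$. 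This consistency-and-assembly step is precisely what occupies the bulk of Little's original proof (and of the treatment in Lov\'asz--Plummer), and a per-ear induction of the kind you sketch is not known to produce it in a routine way. As written, the ``if'' direction is a program, not a proof.
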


   Fisher and Little  gave a characterization for the
existence of an orientation of a graph such that all the even cycles
 are oddly oriented as follows.

\begin{thm}\cite{Fi}\label{little}
A graph has an orientation under which every cycle of even length is
oddly oriented if and only if the graph contains no subgraph which
is, after the contraction of at most one cycle of odd length, an
even subdivision of $K_{2,3}$.
\end{thm}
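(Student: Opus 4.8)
The plan is to translate the orientation problem into a system of linear equations over $\mathbb{F}_2$ and to identify the forbidden subgraph as the exact obstruction to solvability. First I would fix an arbitrary reference orientation $R$ of $G$ and encode any orientation $D$ by the cochain $d\in\mathbb{F}_2^{E(G)}$ with $d(e)=1$ precisely when $D$ reverses $e$ relative to $R$. For an even cycle $C$ with a fixed traversal, let $\sigma_R(C)\in\mathbb{F}_2$ record the parity of the number of edges of $C$ that agree with the traversal under $R$; this parity is independent of the chosen direction because $|C|$ is even. Reversing an edge toggles its agreement, so $C$ is oddly oriented in $D$ iff $\langle d,C\rangle:=\sum_{e\in C}d(e)\equiv 1+\sigma_R(C)\pmod 2$. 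Thus an orientation making every even cycle oddly oriented exists iff the linear system $\langle d,C\rangle=1+\sigma_R(C)$, indexed by all even cycles $C$, is solvable in $d$.

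The engine of the proof is a single parity lemma. Suppose three internally disjoint $u$--$v$ paths $P,Q_1,Q_2$ form the even cycles $C_1=P\cup Q_1$, $C_2=P\cup Q_2$, and $C=Q_1\cup Q_2$. Writing $a(\cdot)$ for the number of edges directed along a traversal and counting each path in both senses, a direct calculation gives $a(C_1)+a(C_2)+a(C)\equiv|P|\pmod 2$, hence
\[\sigma(C_1)+\sigma(C_2)+\sigma(C)\equiv|P|\pmod 2\]
for every orientation. In particular, when the shared path $P$ has even length the three parities sum to $0$, whereas when it has odd length they sum to $1$. I would prove this lemma first, since both directions of the theorem rest on it.

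Necessity is then immediate for the pure obstruction. If $G$ contains an even subdivision of $K_{2,3}$, it carries three internally disjoint even paths $P_1,P_2,P_3$ between two branch vertices, and the three even cycles $C_{ij}=P_i\cup P_j$ pairwise share an even path. The lemma forces $\sigma(C_{12})+\sigma(C_{13})+\sigma(C_{23})\equiv 0$, so they cannot all be oddly oriented, and no admissible orientation exists. For the ``contract one odd cycle'' case I would show that the odd cycle supplies exactly the extra parity defect that an even shared path demands: lifting the three even paths of $K_{2,3}=H/O$ back through the contracted odd cycle $O$ again produces three even cycles whose $\sigma$-sum is pinned to $0$ by the lemma, the odd length of $O$ being absorbed into the lift, contradicting oddness. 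The bookkeeping of the two arcs of $O$ in the lifts is the only delicate point here.

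The hard direction is sufficiency, and it is where I expect the real work to lie. Assuming $G$ contains no forbidden subgraph, I must produce a solution $d$; equivalently, I must show that for every family of even cycles whose edge-indicators sum to $0$ over $\mathbb{F}_2$ the corresponding right-hand sides also sum to $0$. I would first reduce to $G$ being $2$-connected, since both the orientation condition and the forbidden subgraphs localize to blocks, and then argue by induction on $|E(G)|$ by deleting or contracting a carefully chosen edge or ear. The crux is to show that the subgraph $H$ carrying any \emph{minimal} inconsistent dependence — one in which every edge is covered an even number of times — must itself contain a forbidden subgraph, namely an even subdivision of $K_{2,3}$ or one obtained after contracting a single odd cycle. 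I would extract this by tracking how the parity lemma composes along a decomposition of $H$ into cycles that pairwise meet in single paths; ruling out all other possible minimal supports of an odd obstruction is the main obstacle of the whole argument.
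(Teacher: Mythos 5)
The paper never proves this statement: it is quoted directly from Fischer and Little \cite{Fi}, so your proposal can only be judged on its own terms, and on those terms it has one repairable flaw and one fatal gap. The repairable flaw is in the necessity argument for the ``contract one odd cycle'' case. Your parity lemma is stated for theta subgraphs (three internally disjoint $u$--$v$ paths), but in the essential configuration --- the odd cycle $O$ becoming a branch vertex after contraction, with three even paths $P_1,P_2,P_3$ attached at distinct points $x_1,x_2,x_3$ and all three arcs of $O$ odd, which is in fact the \emph{only} case not already containing a pure even subdivision of $K_{2,3}$ (if the contracted vertex has degree $2$, or if some arc is even, one checks directly that a pure even subdivision of $K_{2,3}$ sits inside $H$) --- the three available even cycles each consist of two paths plus two arcs, and any two of them intersect in a \emph{disconnected} edge set (a path $P_i$ together with an arc), not in a single shared path. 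So the lemma as stated does not ``pin'' their $\sigma$-sum. What you actually need is the more general invariance principle implicit in your setup: for any family of even cycles covering each edge an even number of times, $\sum_C \sigma(C) \bmod 2$ is orientation-independent, and here a direct count (each $P_i$ and each arc covered exactly twice) gives $\sum\sigma \equiv 0$ while three oddly oriented cycles would force $\sum\sigma\equiv 1$. You flagged this bookkeeping as delicate but did not do it; done properly it works.

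The fatal gap is sufficiency, which you yourself label ``the main obstacle'' and then leave unproved. Your $\mathbb{F}_2$ reduction is correct and reduces the hard direction to: if some family $S$ of even cycles covers every edge an even number of times while $|S|+\sum_{C\in S}\sigma(C)\equiv 1 \pmod 2$, then $G$ contains a subgraph that is, after contracting at most one odd cycle, an even subdivision of $K_{2,3}$. But that implication \emph{is} the theorem --- it is the entire content of Fischer and Little's paper --- and your proposal offers for it only the name of a strategy. The sketched induction on $|E(G)|$ by deletion or contraction faces concrete unaddressed obstacles: deleting an edge can destroy even cycles and render the system consistent without yielding information about $G$; contracting an edge or ear changes the parities of all cycles through it, so ``even cycle'' and ``even subdivision'' are not preserved; and neither operation obviously permits lifting a solution of the smaller system back to $G$, nor pulling a forbidden subgraph of a minor back to a forbidden subgraph of $G$. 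Extracting a forbidden configuration from a minimal inconsistent dependence requires a genuine structural analysis of how the cycles in $S$ can overlap, and nothing in the proposal rules out the many possible minimal supports. As it stands, you have proved only the easy (necessity) direction, modulo the bookkeeping above.
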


For bipartite graphs, we have the following immediate corollary.

\begin{cor} \label{bipartite}
There exists an orientation of  a bipartite graph $G$ such that all
the cycles of  $G$ are oddly oriented if and only if $G$ contains no
even subdivision of $K_{2,3}$.
\end{cor}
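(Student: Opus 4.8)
The plan is to derive the corollary directly from Theorem~\ref{little} by exploiting the single structural feature that distinguishes bipartite graphs: they contain no cycle of odd length. Since $G$ is bipartite, every cycle of $G$ has even length, so the phrase ``all the cycles of $G$ are oddly oriented'' coincides verbatim with the left-hand condition of Theorem~\ref{little}, namely that every cycle of even length is oddly oriented. Thus the only work lies in showing that, for bipartite $G$, the right-hand condition of Theorem~\ref{little} simplifies to ``$G$ contains no even subdivision of $K_{2,3}$.''

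The key observation is that the contraction step in Theorem~\ref{little} is vacuous here. Let $H$ be any subgraph of $G$. Because $G$ is bipartite, $H$ inherits a proper $2$-coloring and is therefore bipartite as well, so $H$ has no cycle of odd length. Consequently the operation ``contraction of at most one cycle of odd length'' applied to $H$ can only be the contraction of zero odd cycles, i.e.\ it leaves $H$ unchanged. Hence the existence of a subgraph of $G$ that becomes an even subdivision of $K_{2,3}$ after contracting at most one odd cycle is equivalent to the existence of a subgraph of $G$ that already is an even subdivision of $K_{2,3}$---that is, to $G$ containing an even subdivision of $K_{2,3}$.

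With this reduction in hand, both implications are immediate. For the forward direction, if an orientation making every cycle oddly oriented exists, then by Theorem~\ref{little} no subgraph of $G$ yields an even subdivision of $K_{2,3}$ after contracting at most one odd cycle; taking zero contractions shows $G$ contains no even subdivision of $K_{2,3}$. Conversely, if $G$ contains no even subdivision of $K_{2,3}$, then by the equivalence above $G$ satisfies the forbidden-subgraph hypothesis of Theorem~\ref{little}, which yields the desired orientation.

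I do not anticipate a genuine obstacle: the entire argument rests on the elementary fact that subgraphs of a bipartite graph are bipartite and hence free of odd cycles. The only point requiring care is to confirm that ``at most one'' in Theorem~\ref{little} legitimately permits contracting zero cycles, so that the two forbidden-subgraph conditions are literally equivalent rather than merely one-sided.
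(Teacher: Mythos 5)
Your proof is correct and follows essentially the same route as the paper, which presents this as an ``immediate corollary'' of Theorem~\ref{little}: since a bipartite graph and all its subgraphs contain no odd cycles, every cycle is even and the contraction clause is vacuous (zero contractions being allowed by ``at most one''), so the Fisher--Little condition reduces exactly to forbidding an even subdivision of $K_{2,3}$. You have merely spelled out the reduction the paper leaves implicit, and done so accurately.
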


Based on such results, Yan and Zhang found that the permanental polynomial of a
bipartite graph that has no even subdivision of $K_{2,3}$ can be
computed by the determinant.

  \begin{thm}\cite{W.} \label{Yan }
Let $G$ be a bipartite graph containing no even subdivision of
$K_{2,3}$. Then there exists an orientation $G^{e}$ of $G$ such that
\begin{align}
\pi(G,x)=\mathrm{det}(xI-A(G^{e})).
\end{align}
\end{thm}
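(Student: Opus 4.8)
The plan is to take the orientation $G^e$ furnished by Corollary~\ref{bipartite}, under which every cycle of $G$ is oddly oriented, and to compare $\pi(G,x)$ with $\det(xI-A(G^e))$ coefficient by coefficient. Recall that the coefficient of $x^{n-k}$ in $\pi(G,x)$ is $(-1)^k$ times the sum of the $k\times k$ principal subpermanents of $A(G)$, while the coefficient of $x^{n-k}$ in $\det(xI-A(G^e))$ is $(-1)^k$ times the sum of the $k\times k$ principal minors of $A(G^e)$. Hence it suffices to establish, for every vertex subset $S\subseteq V(G)$, the single identity
\[
\mathrm{per}(A(G[S]))=\det(A(G^e[S])),
\]
and then sum over all $S$ with $|S|=k$.

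To prove this identity I would expand both sides as sums over the permutations $\sigma$ of $S$. Since the diagonal of $A(G)$ is zero, only fixed-point-free permutations $\sigma$ with $\{v_i,v_{\sigma(i)}\}\in E(G)$ for every $i$ contribute, and each contributes exactly $+1$ to the permanent; the very same permutations index the nonzero terms of the determinant. The whole problem therefore reduces to showing that $\mathrm{sgn}(\sigma)\prod_{i\in S}a^e_{i\sigma(i)}=+1$ for each such admissible $\sigma$, where $a^e_{ij}$ denotes the entries of $A(G^e)$. I would argue one permutation cycle at a time, using $\mathrm{sgn}(\sigma)=\prod_c(-1)^{\ell(c)-1}$. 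A cycle of length two lies on a single edge and contributes $(-1)^{2-1}a^e_{ij}a^e_{ji}=(-1)(-1)=+1$. A cycle of length $\ell\ge 4$ traverses a genuine cycle $C$ of $G$, which is even by bipartiteness; the product of the skew entries along the traversal equals $(-1)^r$ with $r$ the number of edges pointing against the traversal, and since $C$ is \emph{oddly oriented} this $r$ is odd. Thus the product is $-1$ and the cycle contributes $(-1)^{\ell-1}(-1)=(-1)^{\ell}=+1$.

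Multiplying the cycle contributions shows every admissible $\sigma$ yields $+1$ on both sides, so $\mathrm{per}(A(G[S]))=\det(A(G^e[S]))$; summing over $|S|=k$ matches all coefficients and gives $\pi(G,x)=\det(xI-A(G^e))$.

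I expect the heart of the argument, and the only delicate point, to be the cycle-by-cycle sign computation: one must check that being oddly oriented is precisely the condition making the skew-entry product around each real cycle equal to $-1$, independently of the direction of traversal, and that this $-1$ exactly cancels the $(-1)^{\ell-1}$ coming from the permutation sign for every even length $\ell$. Treating the edges (length-two cycles) uniformly with the longer cycles, and invoking bipartiteness to rule out odd cycles, is where attention is required; the reduction to the per--minor identity and the final coefficient matching are then routine.
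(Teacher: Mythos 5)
Your argument is correct, and its skeleton coincides with the sufficiency half of the paper's Theorem \ref{orientation}: take the orientation supplied by Corollary \ref{bipartite} under which every cycle is oddly oriented, reduce coefficient matching to the single identity $\mathrm{per}(A(G[S]))=\det(A(G^{e}[S]))$ over all principal submatrices, and sum over $|S|=k$. Where you genuinely differ is at the key step: the paper observes that the restriction of $G^{e}$ to any induced subgraph $H$ is a Pfaffian orientation and then simply \emph{cites} the Lov\'asz--Plummer fact that $\mathrm{per}(A(H))=\det(A(H^{e}))$ for a Pfaffian-oriented bipartite graph, whereas you prove that identity from scratch by decomposing each admissible permutation into cycles and checking signs term by term: a $2$-cycle contributes $(-1)a^{e}_{ij}a^{e}_{ji}=+1$ by skew-symmetry, and a permutation cycle of length $\ell\ge 4$ traverses a genuine cycle of $G$, which is even by bipartiteness, and whose odd orientation makes the product of skew entries equal to $-1$, exactly cancelling $\mathrm{sgn}=(-1)^{\ell-1}$. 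Your bookkeeping is sound, including the two points you flag: for an even cycle the parity of the against-traversal edge count is the same in both traversal directions (since $r$ odd iff $\ell-r$ odd when $\ell$ is even), and bipartiteness excludes odd permutation cycles of length $\ge 3$. What your route buys is self-containedness and transparency --- it shows directly that ``all cycles oddly oriented'' forces \emph{termwise} equality of the two expansions, and it is in essence the same permutation-expansion machinery the paper itself deploys for the necessity direction of Theorem \ref{orientation}; what the paper's citation buys is brevity, at the cost of routing through the notion of Pfaffian orientation (nice cycles) when the hypothesis in force is the stronger one that every cycle is oddly oriented.
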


 In fact we can show that the converse of the theorem is also valid.

\begin{thm}\label{orientation}
Let $G$ be a bipartite graph of order $n$. Then an orientation
$G^{e}$ of $G$ satisfies  $\pi(G,x)=\mathrm{det}(xI-A(G^{e}))$ if
and only if each cycle of $G$ is oddly oriented in $G^e$.
\end{thm}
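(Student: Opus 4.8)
The plan is to compare the two polynomials coefficient by coefficient through a Sachs-type expansion, using that $G$ is bipartite so that every cycle has even length. Write $A=A(G)$ and $A^{e}=A(G^{e})$. Expanding $\mathrm{per}(xI-A)$ and $\det(xI-A^{e})$ along the diagonal, I group the contributing permutations by their support $W$; since both $A$ and $A^{e}$ have zero diagonal, on $W$ only fixed-point-free permutations survive, and these decompose into disjoint directed cycles of length $\ge 2$. Thus both polynomials expand as sums over \emph{Sachs subgraphs} $U$ of $G$, by which I mean subgraphs in which every component is an edge or a cycle, with the uncovered $n-|V(U)|$ vertices contributing the factor $x^{\,n-|V(U)|}$.

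First I would record the per-component weights. An edge component $\{v_i,v_j\}$ contributes $+1$ to both expansions (in $\mathrm{per}$ from $(-a_{ij})(-a_{ji})$; in $\det$ from the transposition sign $-1$ times $S_{ij}S_{ji}=-1$). An even cycle $C$ contributes $+2$ to the permanent (two traversal directions, each giving $(-1)^{|C|}=+1$), while in the determinant it contributes $2\epsilon_C$, where $\epsilon_C=+1$ exactly when $C$ is oddly oriented in $G^e$ and $\epsilon_C=-1$ otherwise. Since these weights multiply over components, the coefficient of $x^{n-k}$ in $\mathrm{per}(xI-A)$ is $\sum_{|V(U)|=k}2^{c(U)}$ and in $\det(xI-A^{e})$ is $\sum_{|V(U)|=k}2^{c(U)}\prod_{C}\epsilon_C$, where $c(U)$ counts the cycles of $U$ and $C$ ranges over them. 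Hence
\[
\pi(G,x)-\det(xI-A^{e})=\sum_{k}x^{n-k}\sum_{|V(U)|=k}2^{c(U)}\Bigl(1-\prod_{C}\epsilon_C\Bigr),
\]
and every inner summand is nonnegative because $\prod_{C}\epsilon_C\in\{\pm1\}$.

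The equivalence then drops out. If every cycle of $G$ is oddly oriented, then $\epsilon_C=1$ for every cycle occurring in every $U$, each bracket vanishes, and the two polynomials agree. Conversely, arguing by contraposition, suppose some cycle $C_0$ is evenly oriented. Taking $U=C_0$ on its own is a legitimate Sachs subgraph on $|V(C_0)|$ vertices whose only cycle is evenly oriented, so its summand equals $2\bigl(1-(-1)\bigr)=4>0$; as all other summands at that vertex count are nonnegative, the coefficient of $x^{\,n-|V(C_0)|}$ is strictly larger in $\pi(G,x)$ than in $\det(xI-A^{e})$, so the polynomials differ.

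The main obstacle is the sign bookkeeping in the determinant expansion: for a cycle of even length $\ell$ one must combine the permutation sign $(-1)^{\ell-1}$ with the product of skew-entries along both traversal directions (which coincide precisely because $\ell$ is even) and verify that the total is $2\epsilon_C$ with $\epsilon_C=+1$ iff $C$ is oddly oriented. Bipartiteness is used exactly here, and also to ensure Sachs subgraphs cover an even number of vertices so that no extra signs intrude. Once this local computation and the nonnegativity of every summand are secured, the global comparison — and in particular the ``only if'' direction through a single evenly oriented cycle — is immediate.
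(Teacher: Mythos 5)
Your proof is correct. The component weights are computed with the right signs ($+1$ per edge in both expansions; $+2$ per even cycle in the permanent, $2\epsilon_C$ in the determinant, the two traversal directions agreeing because the cycle length is even), the resulting identity expressing $\pi(G,x)-\mathrm{det}(xI-A(G^{e}))$ as a sum of nonnegative terms $2^{c(U)}\bigl(1-\prod_{C}\epsilon_C\bigr)$ over Sachs subgraphs is valid, and taking $U=C_0$ alone as the witness yields the strict coefficient gap needed for the converse. Your route differs from the paper's in organization rather than in spirit. The paper expands the coefficients as $b_{2k}=\sum_H \mathrm{per}(A(H))$ and $a_{2k}=\sum_H \mathrm{det}(A(H^{e}))$ over induced subgraphs $H$ on $2k$ vertices; for sufficiency it observes that restricting an all-cycles-oddly-oriented orientation to any induced subgraph gives a Pfaffian orientation and then quotes the identity $\mathrm{per}(A(H))=\mathrm{det}(A(H^{e}))$, while for necessity it compares $\mathrm{per}(A(G[C]))$ with $\mathrm{det}(A(G^{e}[C]))$ permutation by permutation, obtaining termwise domination with strict inequality at the cyclic permutation realizing the evenly oriented cycle $C$, and then extends the domination to all induced subgraphs. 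Your single Sachs-type expansion collapses these two layers (induced subgraphs, then permutations within them) into one: this makes the argument self-contained --- you never invoke the Pfaffian permanent--determinant theorem --- and it yields an exact closed formula for the difference polynomial, in particular a quantitative gap of at least $4$ in the coefficient of $x^{n-|V(C_0)|}$. What the paper's version buys is brevity via the cited identity; what yours buys is an explicit combinatorial identity from which both implications drop out simultaneously. The one delicate step, combining the sign $(-1)^{\ell-1}$ of an $\ell$-cycle with the product of skew entries and checking the two traversal directions coincide for even $\ell$, is exactly the point you flag, and your resolution of it is correct; bipartiteness enters precisely there, as you say.
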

\begin{proof}  Let
$$\mathrm{det}(xI-A(G^{e}))=\sum_{i=0}^na_{i}x^{n-i}.$$
Then  we have
$$a_{2k+1}=b_{2k+1} =0, k=0,1,2,...,$$
and $$b_{2k}= \sum\limits_H\mathrm{per}(A(H)),
a_{2k}=\sum\limits_H\mathrm{det}(A(H^{e})),k=0,1,...,$$ where both
sums range over all induced subgraphs $H$ of $G$ with $2k$ vertices.

For the complete we reprove the sufficiency. If $G$ has an
orientation $G^e$ such that every cycle is oddly oriented, then the
restriction of $G^e$ on each induced subgraph $H$ of $G$ is a
Pfaffian orientation $H^e$. Hence
$\textrm{per}(A(H))=\textrm{det}(A(H^e))$. That means that
$b_{i}=a_{i}$ for each $i\geq 1$. Hence
$\pi(G,x)=\mathrm{det}(xI-A(G^{e}))$.

We now prove the necessity. Suppose that  $G$ has a cycle $C$ that
is evenly oriented in $G^{e}$. For convenience, let
$V(C)=\{1,2,\ldots,2k\}$ and $C=12\cdots(2k)1$.  Now we consider the
subgraph $G[C]$ induced by the vertices of $C$. Let $G^{e}[C]$ be
the orientation $G^{e}$ restricted on $G[C]$. From the definitions
of permanents and determinants, we have  that
$\mathrm{per}(A(G[C]))=\sum\limits_{\sigma}
a_{1\sigma(1)}a_{2\sigma(2)}\cdots a_{2k\sigma(2k)}$ and
$\mathrm{det}(A(G^{e}[C]))=\sum\limits_{\sigma}
\text{sgn}(\sigma)a^{\prime}_{1\sigma(1)}a^{\prime}_{2\sigma(2)}\cdots
a^{\prime}_{2k\sigma(2k)}$, where $a_{i\sigma(i)}$ and
$a^{\prime}_{i\sigma(i)}$ are the elements in the $i$-th row and
$\sigma(i)$-th column of $A(G[C])$ and $A(G^{e}[C])$, respectively,
and both  sums range over all the permutations of $\{1,
2,\cdots,2k\}$. For any given $\sigma$, if some $a_{i\sigma(i)}$
equals zero, then $a_{1\sigma(1)}a_{2\sigma(2)}\cdots
a_{2k\sigma(2k)}
=a^{\prime}_{1\sigma(1)}a^{\prime}_{2\sigma(2)}\cdots
a^{\prime}_{2k\sigma(2k)}=0$. Otherwise,
$a_{1\sigma(1)}a_{2\sigma(2)}\cdots a_{2k\sigma(2k)}=1$ and
$a^{\prime}_{1\sigma(1)}a^{\prime}_{2\sigma(2)}\cdots
a^{\prime}_{2k\sigma(2k)}=1\,$ or $\,-1$. Hence
$a_{1\sigma(1)}a_{2\sigma(2)}\cdots a_{2k\sigma(2k)} \geq
\text{sgn}(\sigma)
a^{\prime}_{1\sigma(1)}a^{\prime}_{2\sigma(2)}\cdots
a^{\prime}_{2k\sigma(2k)}$. For the cycle $\sigma=(1 2\cdots2k)$, we
know that $\text{sgn}(\sigma)=-1$ and
$a^{\prime}_{1\sigma(1)}a^{\prime} _{2\sigma(2)}\cdots
a^{\prime}_{2k\sigma(2k)}=1$ since $C$ is evenly oriented. So
$\text{sgn}(\sigma)a^{\prime}_{1\sigma(1)}a^{\prime}_{2\sigma(2)}\cdots
a^{\prime}_{2k\sigma(2k)}=-1$, while
$a_{1\sigma(1)}a_{2\sigma(2)}\cdots a_{2k\sigma(2k)}=1$. Hence
$\mathrm{det}(A(G^{e}[C]))<\mathrm{per}(A(G[C]))$.

In an analogous argument, we have that
$\mathrm{det}(A(G^{e}[H]))\leq\mathrm{per}(A(G[H]))$ for any induced
subgraph $H$ of $G$. Hence
$a_{2k}=\sum\limits_H\mathrm{det}(A(H^{e}))<\sum\limits_H\mathrm{per}(A(H))=b_{2k}$
and $\pi (G, x)\neq \mathrm{det}(xI-A(H^{e}))$.
\end{proof}

\begin{cor} \label{criterion}There exists an orientation $G^{e}$ of a bipartite graph $G$ such that
$\pi(G,x)=\mathrm{det}(xI-A(G^{e}))$  if and only if $G$  contains
no even subdivision of $K_{2,3}$.
\end{cor}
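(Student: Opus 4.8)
The plan is to obtain this corollary directly by chaining the two biconditionals already established, namely Corollary~\ref{bipartite} (an orientation of a bipartite graph with every cycle oddly oriented exists if and only if $G$ contains no even subdivision of $K_{2,3}$) and Theorem~\ref{orientation} (an orientation $G^e$ satisfies $\pi(G,x)=\mathrm{det}(xI-A(G^e))$ if and only if every cycle of $G$ is oddly oriented in $G^e$). The intermediate notion ``$G$ admits an orientation in which every cycle is oddly oriented'' serves as the logical pivot linking the algebraic condition on the left to the structural condition on the right.

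First I would prove the forward implication. Suppose $G$ admits an orientation $G^e$ with $\pi(G,x)=\mathrm{det}(xI-A(G^e))$. By the necessity half of Theorem~\ref{orientation}, every cycle of $G$ is oddly oriented in this $G^e$; in particular $G$ admits \emph{some} orientation with all cycles oddly oriented. Corollary~\ref{bipartite} then forces $G$ to contain no even subdivision of $K_{2,3}$. For the converse, I would assume $G$ contains no even subdivision of $K_{2,3}$; Corollary~\ref{bipartite} supplies an orientation $G^e$ in which every cycle is oddly oriented, and the sufficiency half of Theorem~\ref{orientation} then yields $\pi(G,x)=\mathrm{det}(xI-A(G^e))$ for precisely that orientation.

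The only subtlety worth flagging, rather than a genuine obstacle, is a quantifier alignment: the corollary asserts \emph{existence} of a good orientation, whereas Theorem~\ref{orientation} characterizes \emph{which} orientations are good. The two fit together cleanly because Corollary~\ref{bipartite} is itself stated existentially, so both directions pass the existential quantifier through without loss. Once this is noted, the argument is a two-step composition and requires no further computation.
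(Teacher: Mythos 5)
Your proposal is correct and matches the paper's proof, which likewise derives the corollary immediately by combining Theorem~\ref{orientation} with Corollary~\ref{bipartite}; your version merely spells out the two implications and the quantifier bookkeeping that the paper leaves implicit.
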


\begin{proof} It follows immediately from Theorem \ref{orientation} and
Corollary \ref{bipartite}.
 \end{proof}

\section{Characterizations and recognition of bipartite graphs containing no even subdivision of $K_{2,3}$}
In this section we will characterize bipartite graphs containing no
even subdivision of $K_{2,3}$. We first show that such graphs are
planar. Then we show that a 2-connected bipartite graph  has no even
subdivision of $K_{2,3}$ if and only if it is a planar 1-cycle
resonant graph.

\begin{thm}\cite[Kuratowski's theorem]{J.} \label{planar}
A graph is planar if and only if it contains no subdivision of
either $K_{5}$ or $K_{3,3}$.
\end{thm}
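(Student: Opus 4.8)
The plan is to treat the two implications separately, since they are of quite different difficulty. The forward direction---that a planar graph contains no subdivision of $K_5$ or $K_{3,3}$---is elementary, so I would dispatch it first via Euler's formula. For a connected plane graph on $n$ vertices with $m$ edges and $f$ faces one has $n - m + f = 2$, and counting edge--face incidences yields the bound $m \leq 3n - 6$ for every simple planar graph with $n \geq 3$, together with the sharper $m \leq 2n - 4$ when the graph is triangle-free. Applying these to $K_5$ (where $n = 5$ but $m = 10 > 9$) and to $K_{3,3}$ (where $n = 6$ but $m = 9 > 8$, the triangle-free bound applying since $K_{3,3}$ is bipartite) shows that neither graph is planar. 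Because subdividing an edge and passing to a subgraph both preserve planarity, any graph containing a subdivision of $K_5$ or $K_{3,3}$ is non-planar; contrapositively, a planar graph contains no such subdivision.

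The converse is the substantive half, and here I would argue by contradiction through a minimal counterexample. Let $G$ be a non-planar graph with the fewest edges that contains no subdivision of $K_5$ or $K_{3,3}$; call such a subdivision a \emph{Kuratowski subgraph}. The first reduction is to show that $G$ must be $3$-connected. If instead $G$ had a cut vertex or a $2$-element vertex cut, I would split $G$ along that cut into strictly smaller pieces, add a virtual edge joining the two cut vertices inside each piece, and invoke minimality: each augmented piece, having fewer edges, is either planar or contains a Kuratowski subgraph. In the planar case one glues the resulting plane embeddings along the shared cut to embed $G$; in the other case the Kuratowski subgraph of a piece pulls back to one in $G$. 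Either way the choice of $G$ is contradicted, so the problem localizes to $3$-connected graphs.

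For the $3$-connected case I would run an induction on $|V(G)|$ driven by the contraction lemma that every $3$-connected graph on at least five vertices has an edge $e$ whose contraction $G/e$ is again $3$-connected. By induction $G/e$ is planar, and since it is $3$-connected it admits (by Whitney's uniqueness theorem) an essentially unique plane embedding. The crux---and the step I expect to be the main obstacle---is to lift this embedding of $G/e$ back to $G$ by re-splitting the contracted vertex: one inspects the cyclic order around the branch vertex of the faces incident with the two ends of $e$, and either the neighbours can be partitioned consistently so as to reinsert $e$ inside a single face, yielding a plane embedding of $G$, or the failure of this separation exposes three disjoint paths (respectively a $K_5$-pattern) that assemble into an explicit subdivision of $K_{3,3}$ (respectively $K_5$) in $G$. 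The delicate case analysis controlling how the attachments of the split vertex distribute among the faces is where all the genuine work lies, with the extra subtlety of producing an honest \emph{subdivision} rather than merely a minor; everything else is bookkeeping with connectivity and Euler's formula.
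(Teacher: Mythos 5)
The paper offers no proof of this statement at all: it is Kuratowski's classical theorem, quoted verbatim from Bondy and Murty \cite{J.} purely as a tool for Lemma \ref{bipartite planar}, so there is no in-paper argument to compare yours against. Taken on its own terms, your forward direction is complete and correct (Euler's formula, $m\leq 3n-6$ in general and $m\leq 2n-4$ for triangle-free graphs, plus the fact that planarity is inherited by subgraphs and preserved under subdivision), and your converse follows the standard Thomassen route: reduce a minimal counterexample to the $3$-connected case via $2$-cuts with virtual edges, then induct using the contractible-edge lemma and re-split the contracted vertex inside an embedding of $G/e$.

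There is, however, one concrete gap in the inductive step as written. You say ``by induction $G/e$ is planar,'' but your induction hypothesis (every smaller $3$-connected graph with no Kuratowski subdivision is planar) only applies to $G/e$ after you verify that $G/e$ contains no subdivision of $K_5$ or $K_{3,3}$ --- and since you have committed to subdivisions rather than minors, this is a genuine lemma, not bookkeeping. A Kuratowski subdivision in $G/e$ does not simply pull back to one of the same type in $G$: when the contracted vertex is a branch vertex of the pattern and both ends of $e$ absorb three or more of its branch-edges, un-contracting a $K_5$ subdivision in $G/e$ yields only a $K_{3,3}$ subdivision in $G$. You flag the subdivision-versus-minor subtlety, but you attach it to the re-splitting case analysis, whereas it already bites one step earlier, at the point where the induction hypothesis is invoked. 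Separately, your appeal to Whitney's uniqueness theorem imports a result of comparable depth; Thomassen's argument is usually organized to avoid it, working instead with the cycle bounding the face of $(G/e)-v_e$ that contains the contracted vertex $v_e$, which $3$-connectivity provides directly. Both gaps are fillable by standard material, but as it stands the hard half of your proof is a correct plan with its two load-bearing lemmas left as promises.
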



\begin{Lemma}\label{bipartite planar}
If a bipartite graph contains no even subdivision of $K_{2,3}$, then
it is planar.
\end{Lemma}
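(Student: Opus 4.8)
The plan is to argue by contraposition, showing that a non-planar bipartite graph $G$ must contain an even subdivision of $K_{2,3}$. By Kuratowski's theorem (Theorem \ref{planar}), such a $G$ contains a subdivision $H$ of $K_5$ or of $K_{3,3}$; let $B$ be the set of branch vertices of $H$, i.e.\ the vertices of degree at least $3$ that play the role of the original vertices of $K_5$ or $K_{3,3}$. The central observation is a reformulation of the target structure: since $G$ is bipartite, an even subdivision of $K_{2,3}$ is exactly a pair of vertices lying in the same color class joined by three internally disjoint paths. Indeed, in $K_{2,3}$ the two degree-$3$ vertices are joined by three paths of length $2$, and subdividing every edge evenly turns each such path into an even path; conversely, two same-class vertices joined by three internally disjoint (hence even, and of length at least $2$) paths can be recovered as an even subdivision of $K_{2,3}$ by choosing on each path the vertex at distance $1$ from one endpoint as the middle branch vertex, which splits the even path into two odd pieces.

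First I would fix a proper $2$-coloring of $G$ and restrict it to $B$. By the pigeonhole principle there are two branch vertices $u$ and $v$ of the same color: in the $K_{3,3}$ case two of the three vertices on one side of the bipartition of $K_{3,3}$ must receive the same color, and in the $K_5$ case two of the five branch vertices must. Next I would exhibit three internally disjoint $u$--$v$ paths inside $H$. For $K_{3,3}$ with $u$ and $v$ on the same side, the three paths of $H$ joining $u$ to the three vertices of the opposite side, concatenated in pairs through those vertices with the corresponding paths to $v$, give three internally disjoint $u$--$v$ paths; for $K_5$ the path of $H$ joining $u$ and $v$ directly, together with the paths routed through two further branch vertices, serve the same purpose. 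In either case a short parity computation shows that each of these paths has even length, because the colors of the intermediate branch vertices cancel.

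With $u$ and $v$ in the same color class and joined by three internally disjoint even paths, the reformulation above yields an even subdivision of $K_{2,3}$ as a subgraph of $G$, contradicting the hypothesis; hence $G$ is planar. The main point requiring care is the equivalence asserted in the first paragraph, namely that two same-class vertices joined by three internally disjoint paths is genuinely the same as containing an even subdivision of $K_{2,3}$: one must check both that every edge of $K_{2,3}$ ends up subdivided an even number of times and that the length-at-least-$2$ condition, which is automatic here since same-class vertices are non-adjacent in a bipartite graph, lets us locate the middle branch vertices. Once this dictionary is in place, the pigeonhole step and the parity bookkeeping are routine.
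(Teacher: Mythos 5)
Your proof is correct, but it takes a different route from the paper's. The paper also argues by contraposition via Kuratowski's theorem, but instead of your pigeonhole step it splits into two cases according to how the Kuratowski subgraph $H$ sits inside $G$: if some edge $e$ of $K_{3,3}$ or $K_5$ is \emph{oddly} subdivided in $H$, it takes the resulting even path $P$ together with two cycles $C_1,C_2$ of $K_{3,3}$ or $K_5$ meeting exactly in $e$, and reads off three pairwise internally disjoint even paths between the endvertices of $P$ inside the corresponding cycles $C_1'\cup C_2'$; if instead $H$ is an \emph{even} subdivision of $K_{3,3}$ or $K_5$, it simply observes that $K_{2,3}$ is a subgraph of both, so $H$ already contains an even subdivision of $K_{2,3}$. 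You avoid this case distinction entirely: by $2$-coloring the branch vertices and applying pigeonhole (two same-side vertices of $K_{3,3}$, or any two of the five vertices of $K_5$, share a color), you get a uniform construction of three internally disjoint paths whose evenness is automatic from bipartiteness. Your version also has the merit of making explicit the dictionary ``even subdivision of $K_{2,3}$ in a bipartite graph $=$ two same-color vertices joined by three internally disjoint paths,'' including the placement of the middle branch vertices to split each even path into two odd pieces --- a point the paper uses silently both here (``So we obtain an even subdivision of $K_{2,3}$'') and again in the proof of its Lemma on bipartite ear decompositions. What the paper's argument buys in exchange is locality: in the odd-subdivision case it needs only one offending edge and two cycles through it, and in the all-even case the conclusion is immediate subgraph containment (indeed, for $K_5$ that case is vacuous in a bipartite graph, since an even subdivision of $K_5$ retains odd cycles); your single argument is arguably cleaner, at the cost of carrying the explicit reformulation lemma. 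Both proofs are sound.
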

\begin{proof}
We prove the converse-negative proposition.   Suppose that  a
bipartite graph $G$ is not planar. Then by Theorem \ref{planar} it
contains a subdivision $H$ of  $K_{3,3}$ or $K_{5}$.

If an edge  $e$ of $K_{3,3}$ or $K_5$ is oddly subdivided in $H$,
then  $P$ is the path of even length in $H$  obtained by subdividing
$e$. There are two cycles $C_{1}$, $C_{2}$  containing $e$ in
$K_{3,3}$ or $K_5$ with $C_{1}\cap C_{2}=\{e\}$, and the
corresponding cycles in $H$ are denoted by $C_{1}^{\prime}$,
$C_{2}^{\prime}$. Since $H$ is bipartite,  in $C_{1}^{\prime}\cup
C_{2}^{\prime}$ there are three pairwise internally disjoint paths
of even length connecting the two endvertices of $P$. So we obtain
an even subdivision of $K_{2,3}$.

If  $H$ is an even subdivision of $K_{3,3}$ or $K_{5}$, then
$K_{3,3}$ or $K_{5}$ always contains  $K_{2,3}$ as a subgraph, which
corresponds to an even subdivision of $K_{2,3}$ in $H$.
\end{proof}



A sequence of subgraphs of $G$, $(G_{0}, G_{1},\cdots, G_{m})$ is a \textit{bipartite ear decomposition} of $G$
if $G_{0}$ is an edge $x$, $G_{m}=G$, every $G_{i}$ for
$i=1, 2,\cdots,m$ is obtained from $G_{i-1}$ by adding an path $P_{i-1}$ of odd length which is openly disjoint
from $G_{i-1}$ but its endvertices belong to $G_{i-1}$.
Such an ear decomposition can also be denoted as $G:=x+P_{0}+P_{1}+\cdots+P_{m-1}$.

\begin{Lemma}   \label{is bipartite ear}
The 2-connected bipartite graph $G$ containing no even subdivision
of $K_{2,3}$ has a bipartite ear decomposition starting with any
cycle in it.
\end{Lemma}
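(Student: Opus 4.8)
The plan is to prove that a 2-connected bipartite graph $G$ containing no even subdivision of $K_{2,3}$ admits a bipartite ear decomposition beginning with an arbitrary prescribed cycle $C$. I would use the standard fact that every 2-connected bipartite graph has some bipartite ear decomposition (this is a classical result, and it follows from the general ear decomposition theorem for 2-connected graphs together with bipartiteness forcing every ear to have odd length); the real content here is the freedom to start with \emph{any} cycle. So first I would fix a cycle $C$ in $G$ and argue inductively: build up a sequence $C = G_0', G_1', \ldots$ of subgraphs where $G_0'$ is $C$ (itself obtained from an edge by one ear) and each successive $G_i'$ is obtained by adding an openly disjoint path of odd length whose endpoints lie on the current subgraph. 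Since $G$ is 2-connected and $C \subsetneq G$, as long as $G_i' \neq G$ there is a vertex of $G$ outside $G_i'$, and 2-connectivity guarantees an openly disjoint path (an ``ear'') from one vertex of $G_i'$ to another passing through such an outside vertex. The work is to show each such ear can be taken of \emph{odd} length and that the process terminates with all of $G$.

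The key steps, in order, would be as follows. First, I would establish that $C$ has even length (automatic, since $G$ is bipartite, so all cycles are even) and exhibit $C$ as $x + P$ for an edge $x$ and a single odd path $P$, giving the initial segment of the decomposition. Second, at a general stage with current subgraph $H = G_i'$, if $H \neq G$ I would invoke 2-connectivity to extract a path $Q$ in $G$ that is internally disjoint from $H$ and meets $H$ exactly in its two endpoints, chosen so that $Q$ contains at least one edge or vertex not in $H$. Third—and this is the crux—I would argue that such an ear $Q$ necessarily has \emph{odd} length. This is forced by bipartiteness combined with the no-even-subdivision-of-$K_{2,3}$ hypothesis: if $Q$ had even length, then its two endpoints lie in the same part of the bipartition, yet they are already joined inside $H$ (which is connected and contains $C$) by paths; combining $Q$ with two internally disjoint paths through $H$ between its endpoints would produce three internally disjoint paths of matching parity between two vertices, i.e. precisely an even subdivision of $K_{2,3}$, contradicting the hypothesis. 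The parity bookkeeping here mirrors the argument in Lemma \ref{bipartite planar}.

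The main obstacle, and where I would spend the most care, is the third step: guaranteeing that the ear can always be chosen odd and that no ``even ear'' is forced on us. In a bipartite graph any ear between two vertices in the same part is even and any ear between vertices in different parts is odd, so parity of the ear is dictated by which parts its endpoints occupy. The real claim is that an ear whose endpoints lie in the \emph{same} part—hence even—cannot be added without creating a forbidden even subdivision of $K_{2,3}$, because $H$ already supplies two internally disjoint even paths between those two same-part endpoints (this uses that $H$ is 2-connected, or at least that it contains two internally disjoint paths between the ear's feet, which I would need to maintain as an invariant). I would therefore carry the invariant that each $G_i'$ is 2-connected, so that for any two of its vertices there exist two internally disjoint connecting paths; then an even ear immediately yields the $K_{2,3}$-subdivision and is excluded, leaving only odd ears. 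Finally, since $G$ is finite and each ear adds at least one new vertex or edge, the process terminates at $G_m' = G$, yielding the desired bipartite ear decomposition starting with $C$.
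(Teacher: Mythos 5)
Your proposal is correct and takes essentially the same route as the paper's own proof: grow the subgraph inductively from the prescribed cycle, extract an ear using 2-connectivity, and exclude even ears because two internally disjoint even paths inside the (2-connected) current subgraph together with an even ear would yield three pairwise internally disjoint even paths between two same-colour vertices, i.e.\ an even subdivision of $K_{2,3}$. Your explicit maintenance of the 2-connectivity invariant and your allowance that the ear may be a single missing edge (when every vertex is already covered) are details the paper leaves implicit, but the argument is the same.
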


\begin{proof}
Let $G_{1}$ be any cycle in $G$. We want to show that $G$ has a
bipartite ear decomposition starting with $G_{1}$ by induction. Let
$G_{i}$ be a subgraph of $G$ obtained by successively adding $i-1$
ears, $i\geq1$. If $G_{i}\neq G$, then an edge $uv$ exists in
$G-E(G_{i})$. For  an edge $xy\in E(G_{i})$, $uv$ and $xy$ lie on a
common cycle $C$ since  $G$ is 2-connected. Let $P$ be a path in $C$
such that the intersections of $P$ and $G_{i}$ are the both
endvertices $a$ and $b$ of $P$. If $a$ and $b$ have the same color,
then $P$ is a path of even length. Since $G_i$ is 2-connected, $G_i$
has two internally disjoint paths of even length connecting $a$ and
$b$.  Hence three pairwise internally disjoint paths of even length of $G$
connect $a$ and $b$. That is,  an even subdivision of $K_{2,3}$
exists in $G$ and a contradiction occurs. So $a$ and $b$ have
different colors and $P$ is a path of odd length. Now
$G_{i+1}:=G_i+P$ is a subgraph of $G$ obtained by successively
adding $i$ ears, $i\geq1$.  Hence $G$ has a bipartite ear
decomposition starting with any cycle in it.
\end{proof}

The above result shows that such graphs relate with 1-cycle resonant
graphs.  A connected graph is said to be \textit{$k$-cycle resonant}
if, for $1\leq t\leq k$, any $t$ disjoint cycles in $G$ are mutually
resonant, that is, there is a perfect matching $M$ of $G$ such that
each of the $t$ cycles is an $M$-alternating cycle. $k$-cycle
resonant graphs were  introduced by  Guo and Zhang \cite{Guo2} as a
natural generalization of $k$-resonant benzenoid systems, which
originate from Clar's aromatic sextet theory and Randi\'{c}'s
conjugated circuit model. They obtained that a $k$-cycle resonant
graph is bipartite \cite{Guo2}. In the following theorem, we can see
that such two types of graphs are equivalent under the 2-connected
condition.

\begin{thm} \label{is 1-cycle resonant}
A 2-connected bipartite graph $G$ contains no even subdivision of
$K_{2,3}$  if and only if $G$ is planar 1-cycle resonant.
\end{thm}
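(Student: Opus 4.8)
The plan is to prove the two implications separately, leaning on the two lemmas already established in this section. Throughout I would use the translation that, for a bipartite graph, a cycle $C$ is resonant exactly when it is \emph{nice}, i.e. when $G-V(C)$ has a perfect matching: if $C$ is $M$-alternating then $M\cap E(C)$ is a perfect matching of $C$ and $M\setminus E(C)$ is a perfect matching of $G-V(C)$, while conversely a perfect matching of $G-V(C)$ together with either perfect matching of the even cycle $C$ exhibits $C$ as resonant. Hence $G$ is $1$-cycle resonant if and only if \emph{every} cycle of $G$ is nice. I would also record the standard reformulation that an even subdivision of $K_{2,3}$ in a bipartite graph is precisely a pair of vertices $u,v$ joined by three internally disjoint paths, each necessarily of even length.

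For the forward direction, suppose $G$ is a $2$-connected bipartite graph with no even subdivision of $K_{2,3}$. Planarity is immediate from Lemma~\ref{bipartite planar}. For $1$-cycle resonance I would fix an arbitrary cycle $C$ and invoke Lemma~\ref{is bipartite ear} to obtain a bipartite ear decomposition $C=G_1,\ G_2=G_1+P_1,\ \dots,\ G_m=G$ starting at $C$. I then build a perfect matching by selecting one of the two perfect matchings $M_0$ of the even cycle $C$, and, for each odd ear $P_i$ (which has an even number of internal vertices), matching those internal vertices consecutively along the ear. Since the endpoints of every ear already lie in the previously matched subgraph, the union $M$ is a perfect matching of $G$ with $M\cap E(C)=M_0$; thus $C$ is $M$-alternating and hence resonant. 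As $C$ was arbitrary, every cycle is resonant, so $G$ is planar $1$-cycle resonant.

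For the converse I would argue by contraposition inside a fixed plane embedding. Assume $G$ is planar $1$-cycle resonant but contains an even subdivision of $K_{2,3}$, giving vertices $u,v$ joined by three internally disjoint even paths $Q_1,Q_2,Q_3$. In the induced embedding of this theta-subgraph, one path, say $Q_2$, is the chord lying in the interior of the cycle $C_{13}=Q_1\cup Q_3$ formed by the other two, and $Q_2$ splits that interior into the two regions bounded by $C_{12}=Q_1\cup Q_2$ and $C_{23}=Q_2\cup Q_3$. The crucial observation is that in a plane graph a cycle is a Jordan curve, so no edge joins its strict interior to its strict exterior; therefore whenever a cycle $C$ is nice, $G-V(C)$ decomposes into its interior and exterior parts, each of which must carry a perfect matching, and in particular the strict interior of $C$ is colour-balanced (equally many vertices of each colour class). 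Applying this to the three cycles, which are all nice since $G$ is $1$-cycle resonant, and using that the strict interior of $C_{13}$ is the disjoint union of the internal vertices of $Q_2$ together with the strict interiors of $C_{12}$ and $C_{23}$, the colour balances must telescope: $0=\mathrm{bal}(Q_2^{\mathrm{int}})+0+0$. But the internal vertices of any even $u$--$v$ path consist of $\ell$ vertices of $u$'s opposite colour and $\ell-1$ of $u$'s colour, so $\mathrm{bal}(Q_2^{\mathrm{int}})\neq 0$, a contradiction. Hence a planar $1$-cycle resonant graph contains no even subdivision of $K_{2,3}$.

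The main obstacle I expect is the converse direction, and within it the planar-topology bookkeeping rather than the combinatorics: one must justify rigorously that the restricted embedding of the theta-subgraph has the claimed ``chord inside the outer cycle'' structure, that the Jordan-curve property forbids interior-to-exterior edges so that niceness forces interior colour-balance, and that the interior of $C_{13}$ partitions cleanly as the internal vertices of $Q_2$ together with the interiors of $C_{12}$ and $C_{23}$. Once these topological facts are in place, the contradiction is a one-line parity computation; the conceptual step is recognizing that using the niceness of all three cycles simultaneously (which is exactly what $1$-cycle resonance supplies, in contrast to Pfaffian-type hypotheses that only control nice cycles) makes the colour balances cancel except for the unavoidable imbalance carried by the middle path.
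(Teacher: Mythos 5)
Your proposal is correct and follows essentially the same route as the paper: the forward direction combines Lemma~\ref{bipartite planar} with the ear decomposition of Lemma~\ref{is bipartite ear} to build a perfect matching avoiding any prescribed cycle, and the converse embeds the theta-subgraph with the middle even path inside the cycle formed by the other two and derives a contradiction from niceness of the three cycles. The only (cosmetic) difference is that you track colour balance of the interior regions where the paper simply counts parity of interior vertices, since the odd number of internal vertices of the middle even path already forces the outer cycle to fail to be nice.
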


\begin{proof}
 We first prove the necessity. From Lemma \ref{is bipartite ear},  $G$ has a bipartite ear decomposition $G:=C+P_{1}+P_{2}+\cdots+P_{r}$ starting with any cycle $C$ in it.
Because the ears ${P_{i}}^{\prime}$s for $1\leq i\leq r$ are all of
odd length, the the graph $G-V(C)$ has a perfect matching $M$ which
covers all the internal vertices of each $P_{i}$. So every cycle of
$G$ is nice.  By Lemma \ref{bipartite planar} $G$ is planar. Hence
$G$ is  planar 1-cycle resonant.

Now we prove the sufficiency.  If $G$ contains a subgraph $H$ which
is an even subdivision of $K_{2,3}$, then in the subgraph $H$, there
are three pairwise internally disjoint paths $l_{1}$, $l_{2}$ and
$l_{3}$ of even length joining two given vertices. Since $G$ is
planar, we imbed it in the plane so that  $l_{2}$ lies in the
interior of the cycle $C:=l_{1}\cup l_{3}$. Since $G$ is 1-cycle
resonant,  $C_{1}:=l_{1}\cup l_{2}$ and  $C_{2}:=l_{2}\cup l_{3}$
are nice cycles of $G$. Hence there is an even number of vertices in
the interior of $C_1$ and $C_2$ respectively. Further, since $l_2$
has an odd number of internal vertices, there is an odd number of
vertices in the interior of $C$. This implies that $C$ is not a nice
cycle of $G$, contradicting that $G$ is 1-cycle resonant.
\end{proof}

A connected bipartite graph $G$ is {\em elementary} if each edge is
contained in a perfect matching of $G$.
For more details about such graphs see \cite{Zhang}.
Then by Lemmas
\ref{bipartite planar} and \ref{is bipartite ear} or Theorem \ref{is
1-cycle resonant} we have the following result.

\begin{cor}The 2-connected bipartite graph $G$ containing no even subdivision
of $K_{2,3}$ is a planar and elementary bipartite graph.
\end{cor}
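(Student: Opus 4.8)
The plan is to treat the two asserted properties separately, since planarity comes essentially for free and only the elementary property requires work. Planarity is immediate: $G$ is bipartite and contains no even subdivision of $K_{2,3}$, so Lemma \ref{bipartite planar} already gives that $G$ is planar (equivalently, Theorem \ref{is 1-cycle resonant} packages planarity into its conclusion ``planar 1-cycle resonant''). Thus the real content of the corollary is to verify that $G$ is elementary, i.e.\ that every edge of $G$ lies in some perfect matching.

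My preferred route is through Theorem \ref{is 1-cycle resonant}, which is the most self-contained given what has been established. By that theorem $G$ is 1-cycle resonant, so every cycle of $G$ is resonant: for each cycle $C$ there is a perfect matching $M$ of $G$ for which $C$ is $M$-alternating. Now let $e$ be an arbitrary edge. Because $G$ is 2-connected, $e$ lies on some cycle $C$, and I take a perfect matching $M$ making $C$ be $M$-alternating. If $e\in M$ we are done; otherwise $e$ is one of the non-matching edges of the even alternating cycle $C$, and then the symmetric difference $M'=M\triangle E(C)$ is again a perfect matching of $G$, this time containing $e$. Hence every edge lies in a perfect matching and $G$ is elementary.

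An alternative route uses Lemma \ref{is bipartite ear}: combine the bipartite ear decomposition it supplies with the classical fact (see \cite{Zhang}) that a connected bipartite graph is elementary if and only if it admits a bipartite ear decomposition. The easy direction of that equivalence can be run by induction on the number of ears. At the base step a single edge is trivially elementary; when an odd ear $P$ is attached, it has an even number of internal vertices, so one extends a perfect matching of $G_{i-1}$ across $P$ by pairing up these internal vertices along $P$, and one realizes each new edge of $P$ inside a perfect matching by an alternating-path adjustment at the endvertices.

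I expect the only genuine obstacle to be the elementary property itself; everything else is bookkeeping. In the resonant route the single point to get right is that 2-connectivity forces every edge onto a cycle, after which the symmetric-difference step is immediate. In the ear-decomposition route the delicate part is the parity check ensuring that each interior edge of a freshly added odd ear can be placed into a perfect matching, which is precisely the content of the ``only if'' half of the ear-decomposition characterization; invoking \cite{Zhang} lets me avoid reproving it. I would present the resonant argument as the main proof and mention the ear-decomposition argument as the alternative.
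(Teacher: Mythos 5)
Your proof is correct and takes essentially the same approach as the paper, whose own ``proof'' is just the citation of the same two routes you develop: planarity via Lemma \ref{bipartite planar}, and the elementary property via Theorem \ref{is 1-cycle resonant} or via Lemma \ref{is bipartite ear} combined with the ear-decomposition characterization of elementary bipartite graphs from \cite{Zhang}. Your resonant-cycle argument (2-connectivity puts every edge on a cycle, then a symmetric-difference switch along the $M$-alternating cycle places the edge in a perfect matching) correctly supplies the details the paper leaves implicit.
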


 A \textit{block} of a connected
graph $G$ is a maximal connected subgraph of $G$ without
cutvertices. From Theorem \ref{is 1-cycle resonant} we  have the
following general result.

\begin{cor}A connected bipartite graph $G$ contains no even subdivision of $K_{2,3}$
if and only if each block of $G$ is planar 1-cycle resonant.
\end{cor}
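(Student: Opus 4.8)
The plan is to reduce the statement to the $2$-connected case already settled in Theorem \ref{is 1-cycle resonant}, by exploiting the fact that an even subdivision of $K_{2,3}$ is itself $2$-connected and therefore cannot be spread across several blocks of $G$. First I would recall the standard fact that the edges of a connected graph are partitioned by its blocks, and that two edges lie in the same block precisely when they lie on a common cycle. Since $K_{2,3}$ is $2$-connected and subdividing edges preserves $2$-connectivity, every even subdivision $H$ of $K_{2,3}$ is $2$-connected; hence any two edges of $H$ lie on a common cycle of $H$, and so on a common cycle of $G$. Consequently all edges of $H$ belong to a single block $B$ of $G$, which gives $H \subseteq B$. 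This is the key structural observation of the proof.

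With this in hand, one sees that $G$ contains an even subdivision of $K_{2,3}$ if and only if some block of $G$ contains one. The blocks of $G$ are of two kinds: single edges (isomorphic to $K_{2}$) and $2$-connected subgraphs. A $K_{2}$ block contains no cycle, so it is vacuously $1$-cycle resonant and is trivially planar, and it is too small to contain any subdivision of $K_{2,3}$; thus such blocks are harmless for both implications. For a $2$-connected block $B$, Theorem \ref{is 1-cycle resonant} applies directly: $B$ contains no even subdivision of $K_{2,3}$ if and only if $B$ is planar $1$-cycle resonant.

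Combining these, I would argue the two directions. For the necessity, if $G$ has no even subdivision of $K_{2,3}$, then no block can contain one, so every $2$-connected block is planar $1$-cycle resonant by Theorem \ref{is 1-cycle resonant}, while the $K_{2}$ blocks are trivially so. For the sufficiency, if $G$ did contain an even subdivision $H$ of $K_{2,3}$, then by the structural observation $H$ would lie inside a single block $B$; this block cannot be a $K_{2}$, so $B$ is $2$-connected and, being planar $1$-cycle resonant, contains no even subdivision of $K_{2,3}$ by Theorem \ref{is 1-cycle resonant}, contradicting $H \subseteq B$. The only genuine obstacle is the block-confinement step; once it is established the result follows at once from the $2$-connected theorem, the lone point requiring care being the vacuous treatment of $K_{2}$ blocks under the $1$-cycle resonant definition.
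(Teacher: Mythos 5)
Your proposal is correct and matches the paper's (implicit) argument: the paper states this corollary as an immediate consequence of Theorem \ref{is 1-cycle resonant}, and the reduction you spell out --- an even subdivision of $K_{2,3}$ is $2$-connected, hence confined to a single block, with $K_2$ blocks handled vacuously --- is exactly the omitted bookkeeping. Your write-up simply makes explicit what the paper leaves to the reader.
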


From the proof of Theorem \ref{is 1-cycle resonant}, we have the
following result.

\begin{cor}If a connected graph is a planar 1-cycle resonant graph,
then it contains no even subdivision of $K_{2,3}$.
\end{cor}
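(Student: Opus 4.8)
The plan is to reuse the sufficiency argument of Theorem \ref{is 1-cycle resonant}, observing that the hypothesis of 2-connectivity was never actually invoked there. I would argue by contradiction: suppose $G$ is a connected planar 1-cycle resonant graph that nevertheless contains an even subdivision $H$ of $K_{2,3}$, and derive a contradiction with 1-cycle resonance. First I record two facts valid for any 1-cycle resonant graph $G$. Such a graph is bipartite (a $k$-cycle resonant graph is bipartite, as noted after the statement of Theorem \ref{is 1-cycle resonant}), so that the parity arguments below apply. Moreover, every cycle $C$ of $G$ is nice: taking $t=1$ in the definition yields a perfect matching $M$ for which $C$ is $M$-alternating, whence all vertices of $C$ are matched within $C$ and $M$ restricted to $G-V(C)$ is a perfect matching of $G-V(C)$. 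In particular $G$ possesses a perfect matching, so niceness is meaningful.

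The core of the proof is a parity statement for plane bipartite graphs, which I would isolate first: if $C$ is a nice cycle of a plane bipartite graph, then the number of vertices lying in the interior of $C$ is even. I would establish this via the Jordan curve theorem. Every edge of $G$ whose endpoints avoid $V(C)$ lies wholly inside or wholly outside $C$; consequently a perfect matching of $G-V(C)$ splits into a perfect matching of the interior vertices and a perfect matching of the exterior vertices, forcing each of these two vertex sets to have even cardinality.

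With this in hand I would follow the embedding set up in the proof of Theorem \ref{is 1-cycle resonant}. The subgraph $H$ supplies three pairwise internally disjoint paths $l_1,l_2,l_3$ of even length joining two fixed vertices. Fixing a plane embedding of $G$ and relabelling the paths if necessary, I may assume $l_2$ lies in the interior of the cycle $C:=l_1\cup l_3$. Then $l_2$ together with $C$ divides the interior of $C$ into the interior of $C_1:=l_1\cup l_2$, the interior of $C_2:=l_2\cup l_3$, and the internal vertices of $l_2$, which lie strictly inside $C$. Since $C_1$ and $C_2$ are nice, their interiors each contain an even number of vertices by the parity statement, while $l_2$, being a path of even length, has an odd number of internal vertices. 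Hence $C$ encloses an odd number of vertices, so $C$ is not nice, contradicting 1-cycle resonance.

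The only delicate step is the enclosure bookkeeping: I must check that in the chosen embedding the interiors of $C_1$ and $C_2$, together with the internal vertices of $l_2$, account for exactly the interior vertices of $C$, with no double counting and no vertex left unassigned. This is precisely where planarity and the Jordan curve theorem do the real work, and it is the part deserving care; the remaining parity arithmetic is identical to that already used for Theorem \ref{is 1-cycle resonant}. Since 2-connectivity plays no role anywhere in this chain of reasoning, the conclusion holds for every connected planar 1-cycle resonant graph, which is exactly the claim.
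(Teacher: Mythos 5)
Your proposal is correct and is essentially the paper's own argument: the paper derives this corollary simply by observing that the sufficiency half of the proof of Theorem \ref{is 1-cycle resonant} (the theta-graph parity argument with $C_1=l_1\cup l_2$, $C_2=l_2\cup l_3$ and the odd count of internal vertices of $l_2$) never uses 2-connectivity. Your additional care in justifying the parity lemma (a nice cycle of a plane graph encloses an even number of vertices, via the Jordan curve theorem) and the enclosure bookkeeping merely makes explicit what the paper leaves implicit.
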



We have seen in the previous theorem that 2-connected bipartite
graphs containing no even subdivision of $K_{2,3}$ are equivalent to
planar 1-cycle resonant graphs. In fact, various characterizations,
the construction and recognition algorithms for planar 1-cycle
resonant graphs have already been obtained  in \cite{Guo2, Guo1, Guo,
Xu}.  Before stating these results, we need to give some terminology
and notations.

Let $H$ be a subgraph of a connected graph $G$.  A \textit{bridge}
$B$ of $H$ is either an edge in $G-E(H)$ with two endvertices in
$H$, or a subgraph of $G$ induced by all edges incident with a
vertex in a  component $B^{\prime}$ of $G-V(H)$. An
\textit{attachment vertex} of a bridge $B$ to $H$ is a vertex in $H$
which is incident with an edge in $B$. Two bridges of a cycle $C$
avoid one another if all the attachment vertices of one bridge lie
between two consecutive  attachment vertices of the other bridge
along  $C$.

 Following Theorem \ref{is 1-cycle resonant} and Theorem 1
of Ref. \cite{Guo1}, we have the following characterizations.

\begin{thm} \label{5 equivalence}
 Let $G$ be a 2-connected bipartite planar graph. Then the following statements are equivalent:

 (1) $G$ contains no even subdivision of $K_{2,3}$,

 (2) $G$ is planar 1-cycle resonant,

 (3) For any cycle $C$ in $G$, $G-V(C)$ has no odd component,

 (4) For any cycle $C$ in $G$, any bridge of $C$ has exactly two attachment vertices which have different colors,

 (5) For any cycle $C$ in $G$, any two bridges of $C$ avoid one another. Moreover,
for any 2-connected subgraph $B$ of $G$ with exactly two attachment vertices, the attachment vertices of $B$ have different colors.
\end{thm}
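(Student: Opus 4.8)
The plan is to split the five-way equivalence into its two advertised ingredients: the new link $(1)\Leftrightarrow(2)$, and the Guo--Zhang characterization of planar $1$-cycle resonant graphs, which supplies $(2)\Leftrightarrow(3)\Leftrightarrow(4)\Leftrightarrow(5)$. Since $G$ is assumed planar throughout, the phrase ``planar $1$-cycle resonant'' in $(2)$ is synonymous with ``$1$-cycle resonant,'' so the equivalence $(1)\Leftrightarrow(2)$ is exactly the assertion of Theorem~\ref{is 1-cycle resonant} applied to $G$, and nothing further is needed for it. The remaining three equivalences are then obtained by quoting Theorem~1 of Ref.~\cite{Guo1}, whose hypotheses (2-connected, bipartite, planar) coincide with ours and whose conclusion lists precisely the component- and bridge-conditions in $(3)$, $(4)$, $(5)$, phrased in the bridge/attachment/avoidance terminology introduced just before the statement. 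Thus the theorem follows by chaining Theorem~\ref{is 1-cycle resonant} with Theorem~1 of \cite{Guo1}.

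To make the chain more transparent and partly self-contained, I would first record the trivial direction and the colour clause. The implication $(2)\Rightarrow(3)$ is immediate: if $G$ is $1$-cycle resonant then every cycle $C$ is nice, so $G-V(C)$ has a perfect matching and hence no odd component. For the ``different colours'' clauses of $(4)$ and $(5)$ I would reuse the parity argument already employed in Lemma~\ref{is bipartite ear} and in the sufficiency part of Theorem~\ref{is 1-cycle resonant}: if some bridge of a cycle $C$ had two attachment vertices $a,b$ of the same colour, then a path through the bridge joining $a$ and $b$ together with the two arcs of $C$ between them gives three internally disjoint paths, all of even length because $G$ is bipartite and $a,b$ share a colour; this is an even subdivision of $K_{2,3}$, contradicting $(1)$. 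The same interior-vertex parity count as in Theorem~\ref{is 1-cycle resonant}---embedding one path inside the cycle formed by the other two and comparing the parities of the numbers of interior vertices---is the mechanism that ties niceness of cycles to these bridge and component conditions.

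The genuinely nontrivial directions are the \emph{reverse} ones, where a purely structural hypothesis on bridges or components must be upgraded to the existence of a perfect matching in $G-V(C)$ for \emph{every} cycle $C$. This is where the difficulty lies, and it is also why the planarity and $2$-connectivity are indispensable: ``no odd component'' alone does not force a perfect matching in a general bipartite graph (an even-order unbalanced component such as $K_{1,3}$ has none), so $(3)\Rightarrow(2)$ cannot be argued locally, and likewise $(4)\Rightarrow(2)$ and $(5)\Rightarrow(2)$ need the bridge-avoidance structure to propagate niceness from a single cycle to all cycles. The main obstacle is therefore not the assembly of the equivalences but these non-local matching arguments, which I would not reprove; Theorem~1 of \cite{Guo1} establishes them in full, and our only genuinely new obligation---the bridge between the $K_{2,3}$-free condition and $1$-cycle resonance---has already been discharged by Theorem~\ref{is 1-cycle resonant}.
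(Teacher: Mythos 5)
Your proposal is correct and takes essentially the same route as the paper: the paper's entire justification is the single remark that the theorem ``follows from Theorem~\ref{is 1-cycle resonant} and Theorem~1 of Ref.~\cite{Guo1}'', i.e.\ exactly your chaining of $(1)\Leftrightarrow(2)$ with the Guo--Zhang equivalences $(2)\Leftrightarrow(3)\Leftrightarrow(4)\Leftrightarrow(5)$. Your supplementary sketches of the easy direction $(2)\Rightarrow(3)$ and the colour-parity arguments go beyond what the paper writes down but are consistent with its methods.
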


The next result gives a structural description of planar 1-cycle
resonant graphs.

\begin{thm} \cite{Guo} \label{ear}
A  2-connected graph $G$ is  planar 1-cycle resonant graph if and only if $G$ has a bipartite ear decomposition $G:=C_{0}+P_{1}
+\cdots +P_{k}$ such that $C_{0}$ is a cycle and each $P_{i}$ ($1\leq i\leq k$) satisfies that
(1) the endvertices $x$, $y$ of $P_{i}$ have different colors in $G_{i-1}=C_{0}+P_{1}
+\cdots +P_{i-1}$, (2) either $x$ and $y$ are adjacent in $G_{i-1}$ or $\{x, y\}$ is a vertex cut of $G_{i-1}$.
\end{thm}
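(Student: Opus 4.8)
The plan is to prove both implications, leaning on the equivalences already established in Theorem \ref{5 equivalence} and on the existence of a bipartite ear decomposition from Lemma \ref{is bipartite ear}. Condition (1) is essentially bookkeeping: every ear of a bipartite ear decomposition has odd length, so in a properly $2$-colored graph its two endpoints automatically receive different colors. Thus the entire content of the theorem lies in condition (2), which must be arranged to hold while keeping the construction planar.

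For the necessity (planar $1$-cycle resonant $\Rightarrow$ such a decomposition), I would first recall that a $1$-cycle resonant graph is bipartite \cite{Guo2}, so by Theorem \ref{is 1-cycle resonant} $G$ contains no even subdivision of $K_{2,3}$, and by Lemma \ref{is bipartite ear} it admits a bipartite ear decomposition $C_{0}+P_{1}+\cdots+P_{k}$ starting from any chosen cycle $C_{0}$. The remaining task is to choose the order of the ears so that, when $P_{i}$ is attached, its endvertices $x,y$ are either already adjacent in $G_{i-1}$ or form a $2$-cut of $G_{i-1}$. The natural device is Theorem \ref{5 equivalence}(4)--(5): every bridge of a cycle in $G$ has exactly two attachment vertices of different colors, and any two bridges of a cycle avoid one another. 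I would argue by a peeling (reverse) induction on a fixed plane embedding: repeatedly delete an ear lying on the boundary of a bounded face whose internal vertices have degree $2$ in the current graph, and show that at the moment of deletion its endpoints are adjacent or separating in the reduced graph. The two-attachment-vertex restriction furnished by Theorem \ref{5 equivalence} is exactly what forces the pair of endpoints of the part still to be reattached to be a separation pair, giving condition (2) upon reversing the sequence.

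For the sufficiency (such a decomposition $\Rightarrow$ planar $1$-cycle resonant), I would induct on the number of ears, maintaining that each $G_{i}$ is $2$-connected, bipartite, and planar. The $2$-connectedness and bipartiteness are immediate from the ear structure together with condition (1). For planarity, assuming $G_{i-1}$ is embedded in the plane, condition (2) is precisely what lets $P_{i}$ be drawn without crossings: if $x,y$ are adjacent, route $P_{i}$ alongside the edge $xy$ inside an incident face; if $\{x,y\}$ is a $2$-cut of the $2$-connected plane graph $G_{i-1}$, then $x$ and $y$ lie on a common face boundary and $P_{i}$ can be inserted into that face. Once $G$ is seen to be a $2$-connected bipartite planar graph, I would finish by checking it contains no even subdivision of $K_{2,3}$ (equivalently, via Theorem \ref{5 equivalence}(3), that $G-V(C)$ has no odd component for any cycle $C$, using that the internal vertices of each odd ear can be matched within the ear) and then invoke Theorem \ref{is 1-cycle resonant} to conclude that $G$ is planar $1$-cycle resonant.

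The main obstacle is the planarity bookkeeping, and in particular the claim used in the sufficiency that a $2$-cut $\{x,y\}$ arising in the decomposition is co-facial in the plane graph $G_{i-1}$, so that the new ear can actually be inserted. This co-faciality is not automatic for arbitrary separation pairs, so I expect to need a separate lemma showing that the $2$-cuts produced by condition (2) always share a face, which is where condition (2) must be used in full strength. The other delicate point is controlling the interaction between a newly created cycle and the matchings certifying resonance, i.e.\ verifying that the ``no odd component'' condition of Theorem \ref{5 equivalence}(3) is preserved under each ear addition.
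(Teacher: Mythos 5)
The paper contains no proof of this statement to compare against: it is quoted directly from \cite{Guo} (whose very title, ``Reducible chains of planar 1-cycle resonant graphs,'' names the key ingredient), so your proposal must stand on its own, and it has genuine gaps in both directions exactly where the real difficulty lies. In the necessity direction, your peeling rule --- delete an ear on the boundary of a bounded face whose internal vertices have degree $2$ --- is essentially vacuous as a selection criterion, since in a $2$-connected plane graph every maximal chain borders a bounded face, and peeling an arbitrary chain fails. Concretely, take two hexagons $C_1$, $C_2$ joined by two disjoint paths of length $3$ (say $u_1a_1a_2w_1$ and $u_2b_1b_2w_2$ with $u_1u_2\in E(C_1)$, $w_1w_2\in E(C_2)$): one checks that every cycle of this graph is nice, so it is planar 1-cycle resonant, yet peeling the chain $u_1a_1a_2w_1$ leaves a graph with cut vertices in which $u_1$ and $w_1$ are neither adjacent nor a vertex cut, so condition (2) fails for that ear and your reverse induction breaks. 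The actual content of \cite{Guo} is precisely the existence, in every planar 1-cycle resonant graph other than a cycle, of a \emph{reducible} chain whose endpoints are adjacent or form a $2$-cut and whose removal preserves planar 1-cycle resonance; you never state this invariant for the necessity direction, nor give any argument producing such a chain, and Lemma~\ref{is bipartite ear} only yields some bipartite ear decomposition, not one in the required order.

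In the sufficiency direction, the co-faciality worry you flag is in fact the unproblematic part: for any $2$-cut $\{x,y\}$ of a $2$-connected plane graph, contracting each $\{x,y\}$-bridge to an edge produces parallel edges between $x$ and $y$, so some face is incident with both, and the ear can be inserted. The step you defer --- ``checking that $G$ contains no even subdivision of $K_{2,3}$'' --- is the whole theorem, and your proposed route is latently circular: verifying Theorem~\ref{5 equivalence}(3) for an \emph{arbitrary} cycle $C$ by matching ear-interiors requires an ear decomposition starting at $C$, i.e.\ Lemma~\ref{is bipartite ear}, whose hypothesis is exactly the conclusion sought. Note that condition (1) alone cannot carry the argument: the cube $Q_3$ is elementary bipartite and so admits a bipartite ear decomposition with all ears odd, yet it contains an even subdivision of $K_{2,3}$ (three internally disjoint even paths between two antipodal-class vertices, of lengths $2$, $2$, $4$). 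Hence any correct sufficiency proof must use condition (2) in an essential way that your outline never supplies. Your reliance on Theorem~\ref{5 equivalence} itself is legitimate, since it rests on \cite{Guo1} rather than \cite{Guo}, but as written both halves of the proposal stop at the point where the proof would actually have to begin.
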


This theorem can be  used  to construct bipartite graphs containing
no even subdivision of $K_{2,3}$. In addition, it  derives an
algorithm of time complexity $O(n^{2})$, where $n$ is the number of
vertices of $G$, to determine whether a 2-connected plane graph is
1-cycle resonant; See  \cite{Guo} for more details.

In \cite{Xu} a linear-time algorithm with respect to the number of
vertices to decide whether a 2-connected plane bipartite graph $G$
is 1-cycle resonant was provided. This algorithm is designed by
testing whether the attachment vertices of any bridge of the outer
cycle $C$ of $G$ satisfy statement (4) in Theorem \ref{5
equivalence} and the attachment vertices $u$, $v$ of any maximal
2-connected subgraph $H$ of any bridge $B$ of $C$ have different
colors. If the above conditions hold, we proceed recursively for
$G:=H$. Otherwise, $G$ is not 1-cycle resonant.

If a given planar bipartite graph $G$ is connected, we can implement
the above method to each 2-connected block of $G$ to test whether
it is 1-cycle resonant. If answers are all ``yes", the graph $G$
contains no even subdivision of $K_{2,3}$. Hence we can present a
linear time algorithm to determine whether a given planar bipartite
graph $G$ contains no even subdivision of $K_{2,3}$ in this
approach.

By the way, we turn to outerplanar graphs.  A graph is
\textit{outerplanar} if it has an embedding into the plane with
every vertex on the boundary of the exterior face. The following
characterizations for outerplanar graphs were given.

\begin{thm}\cite{Ch} \label{outerplanar}
A graph is outerplanar if and only if it contains no subdivision of
$K_{2,3}$ or $K_{4}$.
\end{thm}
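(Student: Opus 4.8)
The plan is to reduce the statement to Kuratowski's criterion (Theorem \ref{planar}) by means of an apex construction. Given $G$, let $G^{+}$ be the graph obtained from $G$ by adding one new vertex $w$ joined to every vertex of $G$. First I would establish the auxiliary equivalence that $G$ is outerplanar if and only if $G^{+}$ is planar. For the forward direction, an outerplanar embedding places all vertices of $G$ on the boundary of the outer face, so $w$ can be inserted into the outer region and joined to each vertex along internally disjoint arcs, yielding a planar embedding of $G^{+}$. For the converse, in any planar embedding of $G^{+}$ the vertex $w$ is adjacent to all of $V(G)$; deleting $w$ leaves an embedding of $G$ in which every vertex lies on the boundary of the face formerly occupied by $w$, and taking that face as the outer face shows $G$ is outerplanar.

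Granting this equivalence, by Theorem \ref{planar} it remains to prove that $G^{+}$ contains a subdivision of $K_{5}$ or $K_{3,3}$ if and only if $G$ contains a subdivision of $K_{4}$ or $K_{2,3}$. The ``if'' part is direct. If $G$ contains a subdivision of $K_{4}$ with branch vertices $v_{1},v_{2},v_{3},v_{4}$, then adjoining the four edges $wv_{1},wv_{2},wv_{3},wv_{4}$ produces a subdivision of $K_{5}$ in $G^{+}$. If $G$ contains a subdivision of $K_{2,3}$ with parts $\{a_{1},a_{2}\}$ and $\{b_{1},b_{2},b_{3}\}$, then placing $w$ on the small side and adjoining the three edges $wb_{1},wb_{2},wb_{3}$ produces a subdivision of $K_{3,3}$ in $G^{+}$ (the three cross-paths $a_{i}b_{j}$ together with the edges $wb_{j}$ give all nine connections between $\{a_{1},a_{2},w\}$ and $\{b_{1},b_{2},b_{3}\}$).

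For the ``only if'' part I would fix a subdivision $S$ of $K_{5}$ or $K_{3,3}$ in $G^{+}$ and analyze the role of $w$ in $S$; this case analysis is the main obstacle. If $w\notin V(S)$, then $S\subseteq G$, and since a subdivision of $K_{5}$ contains a subdivision of $K_{4}$ and a subdivision of $K_{3,3}$ contains a subdivision of $K_{2,3}$, we are done. If $w$ is an internal degree-two vertex of some subdivision path of $S$, then deleting $w$ destroys exactly that one path (the paths of $S$ being internally disjoint), leaving a subdivision of $K_{5}-e$ or $K_{3,3}-e$ inside $G$; here one checks $K_{5}-e\supseteq K_{4}$ and $K_{3,3}-e\supseteq K_{2,3}$, so $G$ again contains the required subdivision. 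Finally, if $w$ is a branch vertex of $S$, then the remaining branch vertices together with the subdivision paths joining them (all of which avoid $w$, since $w$ is merely an endpoint of paths) form a subdivision of $K_{4}$ when $S$ subdivides $K_{5}$, and a subdivision of $K_{2,3}$ when $S$ subdivides $K_{3,3}$ and $w$ is removed from its side. Combining the two implications with the apex equivalence and Theorem \ref{planar} gives the chain ``$G$ outerplanar $\iff$ $G^{+}$ planar $\iff$ $G^{+}$ has no subdivision of $K_{5}$ or $K_{3,3}$ $\iff$ $G$ has no subdivision of $K_{4}$ or $K_{2,3}$,'' which is the assertion.
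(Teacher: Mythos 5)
Your proposal is correct, but note that the paper does not prove this statement at all: it is quoted as a known theorem of Chartrand and Harary (reference [Ch]), so there is no in-paper argument to compare against. What you have written is, in essence, the classical proof of that theorem: the apex (cone) equivalence ``$G$ outerplanar $\iff$ $G^{+}$ planar'' combined with Kuratowski's theorem, and your translation of forbidden subdivisions between $G$ and $G^{+}$ is sound in both directions. The case analysis on the role of $w$ is complete and each reduction checks out: $K_{4}\subseteq K_{5}-e$ and $K_{2,3}\subseteq K_{3,3}-e$ handle the degree-two case, internal disjointness of branch paths handles the branch-vertex case, and the containments $K_{4}\subseteq K_{5}$, $K_{2,3}\subseteq K_{3,3}$ handle the case $w\notin V(S)$. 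The only step stated somewhat breezily is the forward half of the apex equivalence, where you assert that $w$ can be joined to all vertices on the boundary of the outer face by internally disjoint arcs; this is standard but deserves a sentence (order the vertices along the boundary walk of the outer face and route the arcs radially, noting every vertex on a face boundary is accessible from that face), and likewise the converse implicitly uses an embedding on the sphere to re-designate the face vacated by $w$ as the outer face, which you do flag. With those two points made explicit, your argument is a complete and correct proof of the cited theorem.
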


\begin{thm}\cite{Sys} \label{no K4}
A  graph without triangles is outerplanar if and only if it contains no
subdivision of $K_{2,3}$.
\end{thm}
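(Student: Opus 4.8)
The plan is to deduce this from the characterization of outerplanar graphs in Theorem \ref{outerplanar}, which forbids subdivisions of both $K_{2,3}$ and $K_4$. Since triangle-freeness is given, the task reduces to showing that for a triangle-free graph the forbidden subdivision of $K_4$ is redundant, i.e. it is already implied by the absence of a subdivision of $K_{2,3}$.

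The necessity is immediate: if $G$ is outerplanar then by Theorem \ref{outerplanar} it contains no subdivision of $K_{2,3}$ (nor of $K_4$), so in particular no subdivision of $K_{2,3}$; this direction uses nothing about triangles.

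For the sufficiency, I would assume $G$ is triangle-free and contains no subdivision of $K_{2,3}$, and argue by contradiction against the existence of a subdivision of $K_4$, after which Theorem \ref{outerplanar} finishes the proof. Suppose $G$ contains a subdivision $H$ of $K_4$ with branch vertices $a,b,c,d$; the six edges of $K_4$ correspond to six internally disjoint paths $P_{ab},P_{ac},P_{ad},P_{bc},P_{bd},P_{cd}$ in $H$. The triangle $abc$ of $K_4$ becomes the cycle $P_{ab}\cup P_{bc}\cup P_{ac}$ in $H$; since $G$ contains no triangle this cycle has length at least $4$, so at least one of its three paths, say $P_{ab}$, has length at least $2$. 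Then the three paths $P_{ab}$, $P_{ac}\cup P_{bc}$ and $P_{ad}\cup P_{bd}$ join $a$ and $b$, are pairwise internally disjoint, and each has length at least $2$; they constitute a subdivision of $K_{2,3}$, contradicting the assumption. Hence $G$ contains no subdivision of $K_4$, and Theorem \ref{outerplanar} yields that $G$ is outerplanar.

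The crux of the argument — the only point requiring care — is the claim that a subdivision of $K_4$ inside a triangle-free graph already contains a subdivision of $K_{2,3}$. The mechanism is exactly the observation above: triangle-freeness forces at least one edge of the cycle arising from a $K_4$-triangle to be genuinely subdivided, and such a subdivided edge $P_{ab}$ supplies the third internally disjoint $a$–$b$ path of length $\geq 2$ which, together with the two detours through $c$ and through $d$, realizes $K_{2,3}$. The pairwise internal disjointness of these three paths follows directly from the internal disjointness of the six defining paths of the $K_4$-subdivision together with $c\neq d$, and this is the step I would verify explicitly.
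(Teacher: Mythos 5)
Your proof is correct. Note first that the paper itself offers no proof of this statement: it is quoted verbatim from Sys{\l}o \cite{Sys} as a known result, so there is no internal argument to compare yours against. Your derivation from Theorem \ref{outerplanar} is a clean, self-contained way to obtain it: the necessity is indeed immediate, and for the sufficiency your key claim---that in a triangle-free graph any subdivision $H$ of $K_4$ already contains a subdivision of $K_{2,3}$---checks out. If all three paths $P_{ab}, P_{bc}, P_{ac}$ had length $1$ the cycle $P_{ab}\cup P_{bc}\cup P_{ac}$ would be a triangle in $G$, so some path, say $P_{ab}$, has length at least $2$; the three $a$--$b$ paths $P_{ab}$, $P_{ac}\cup P_{cb}$, $P_{ad}\cup P_{db}$ each have length at least $2$ (the latter two pass through the branch vertices $c$ and $d$ respectively), and their pairwise internal disjointness follows, as you say, from the internal disjointness of the six defining paths together with the fact that $c$ and $d$, being branch vertices, are not internal to any path not incident with them. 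Three internally disjoint paths of length at least $2$ between two vertices do constitute a subdivision of $K_{2,3}$ (choose one internal vertex on each path as the image of a degree-$2$ vertex of $K_{2,3}$), so the contradiction with the hypothesis is genuine and Theorem \ref{outerplanar} finishes the argument. The one cosmetic point: your ``say $P_{ab}$'' implicitly relabels the branch vertices, which the full symmetry of $K_4$ licenses, but it is worth stating that $d$ is then the branch vertex outside the chosen triangle.
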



For bipartite graphs, we have the following characterization by
Theorem \ref{no K4}.

\begin{cor}\label{bipartite-out}A bipartite graph is outerplanar if and only if it
contains no subdivision of $K_{2,3}$.
\end{cor}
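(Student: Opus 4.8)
The plan is to reduce the claim directly to Theorem \ref{no K4}, whose hypothesis is that the graph contain no triangle. The key observation is that every bipartite graph is automatically triangle-free: a triangle is a cycle of length three, hence an odd cycle, whereas a bipartite graph contains no odd cycle. Thus any bipartite graph $G$ falls within the scope of Theorem \ref{no K4}, and the stated equivalence should follow with no extra work.

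Concretely, I would first record the triangle-free property. Fixing a proper $2$-coloring of $V(G)$, every cycle of $G$ alternates between the two color classes and therefore has even length; in particular $G$ has no $3$-cycle, so $G$ contains no triangle.

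Then I would simply invoke Theorem \ref{no K4} applied to $G$: since $G$ is a graph without triangles, $G$ is outerplanar if and only if it contains no subdivision of $K_{2,3}$. This is precisely the asserted equivalence, so the corollary is established.

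There is essentially no obstacle here; the only point that requires any care is to confirm that bipartiteness genuinely supplies the triangle-free hypothesis of Theorem \ref{no K4}, which it does because triangles are odd cycles. It is worth emphasizing what is \emph{not} being claimed: a bipartite graph may well contain a subdivision of $K_{4}$ (for instance, subdividing each edge of $K_{4}$ once yields a bipartite graph), so the reduction does \emph{not} proceed by discarding the $K_4$ clause of Theorem \ref{outerplanar} on the grounds that $K_4$-subdivisions are absent. Rather, the redundancy of that clause for triangle-free graphs is exactly the content of the cited Theorem \ref{no K4}, which we are entitled to assume, and the bipartite case is then immediate.
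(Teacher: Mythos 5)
Your proposal is correct and follows exactly the paper's route: the corollary is stated as an immediate consequence of Theorem \ref{no K4}, with bipartiteness supplying the triangle-free hypothesis since a triangle is an odd cycle. Your additional remark that the reduction does not rely on bipartite graphs lacking $K_4$-subdivisions is a sound clarification but does not change the argument.
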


 From Corollary \ref{bipartite-out} and the
characterizations of a planar 1-cycle resonant graph, we can obtain
the following result immediately.

\begin{cor}
  Let $G:=C+ P_{1}+ \cdots+ P_{k}$ be a bipartite  ear
decomposition  obtained by Theorem \ref{ear}. If every ear $P_{i}$
is either a path of length 1 or joins two adjacent vertices of
$C\cup P_{1}\cup \cdots\cup P_{i-1}$ for $1\leq i\leq k$, then $G$
is outerplanar. If not,  $G$ contains a subdivision, but not an even
subdivision, of $K_{2,3}$.
\end{cor}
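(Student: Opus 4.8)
The plan is to deduce both halves of the statement from the single equivalence in Corollary~\ref{bipartite-out}: since $G$ is bipartite, $G$ is outerplanar if and only if it contains no subdivision of $K_{2,3}$. The first step is therefore to translate the hypothesis on the ears through condition~(2) of Theorem~\ref{ear}. An ear $P_i$ of length $1$ is a single edge, so its endvertices $x,y$ cannot already be adjacent in $G_{i-1}$; hence $\{x,y\}$ must be a vertex cut of $G_{i-1}$. An ear that joins two adjacent vertices of $G_{i-1}$ is, as the graph is simple, a path of odd length at least $3$. Thus the condition ``every ear is a path of length $1$ or joins two adjacent vertices'' is precisely the negation of ``some ear is a path of length at least $2$ whose two endvertices are non-adjacent in $G_{i-1}$,'' and the two assertions of the corollary are complementary.

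I would treat the ``if not'' case first, as it is the cleaner of the two. Suppose some ear $P_i$ is a path of length at least $2$ joining vertices $x,y$ that are non-adjacent in $G_{i-1}$. By condition~(2) of Theorem~\ref{ear}, $\{x,y\}$ is then a vertex cut of $G_{i-1}$. Every graph $G_{i-1}$ appearing in an ear decomposition of the $2$-connected graph $G$ is itself $2$-connected, so deleting $\{x,y\}$ leaves at least two components; choosing one $x$--$y$ path through each component produces two internally disjoint paths $Q_1,Q_2$ in $G_{i-1}$, each of length at least $2$ because $x$ and $y$ are non-adjacent. Together with $P_i$, whose interior avoids $G_{i-1}$ entirely, these give three internally disjoint $x$--$y$ paths, each of length at least $2$, i.e. a subdivision of $K_{2,3}$ in $G$. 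Finally, $G$ is planar $1$-cycle resonant, so by Theorem~\ref{is 1-cycle resonant} it contains no even subdivision of $K_{2,3}$; hence the subdivision just produced is not an even one, which is exactly the second assertion.

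For the main hypothesis I would argue by induction along the decomposition, carrying an outerplanar embedding of each $G_i$, that is, a plane drawing with every vertex on the outer face. The base case $G_0=C$ is a cycle and is outerplanar. In the inductive step I would place the new ear in the exterior region: if $P_i$ is a single edge between vertices already on the outer boundary, or a path drawn alongside an edge $xy$ that is incident with the outer face, then $P_i$ and all of its internal vertices can be added outside the current drawing so that every vertex still lies on the outer face. Granting this, $G$ has an outerplanar embedding, equivalently contains no subdivision of $K_{2,3}$, and is outerplanar by Corollary~\ref{bipartite-out}.

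The delicate point, and the step I expect to demand the most care, is exactly this routing in the inductive step: one must verify that for every admissible ear the two endvertices (or the edge being paralleled) can be arranged to lie on the outer face of the embedding built so far. This amounts to tracking which edges stay on the outer boundary as ears are added, and in particular to controlling the situation where a length-$1$ ear has been placed as an interior chord and a subsequent ear is attached at the endpoints of that chord, since such an ear can no longer be routed outside. Showing that the prescribed form of the ears, together with the structure guaranteed by Theorem~\ref{ear}, is compatible with maintaining an outer boundary through all of them is where the real work lies, in contrast to the routine $K_{2,3}$-subdivision bookkeeping of the other case.
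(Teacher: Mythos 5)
Your ``if not'' half is correct and complete: condition (2) of Theorem \ref{ear} forces $\{x,y\}$ to be a vertex cut of the $2$-connected graph $G_{i-1}$, two components of $G_{i-1}-\{x,y\}$ yield two internally disjoint $x$--$y$ paths of length at least $2$, the ear $P_i$ is a third, and Theorem \ref{is 1-cycle resonant} rules out the subdivision being even. This is essentially all the paper has in mind (it states the corollary with no written proof, asserting it follows ``immediately'' from Corollary \ref{bipartite-out} and the characterizations of planar 1-cycle resonant graphs). The problem is the outerplanarity half, where you defer exactly the step that fails.

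The configuration you flag as the ``delicate point'' --- a length-$1$ ear laid as an interior chord, followed by an ear attached at the endpoints of that chord --- is not merely delicate: it defeats the claim itself, so no routing argument can close your gap. Take $C=u_1u_2u_3u_4u_5u_6$ a hexagon, $P_1$ the chord $u_1u_4$ (the colors differ and $\{u_1,u_4\}$ is a vertex cut of $C$, so conditions (1) and (2) of Theorem \ref{ear} hold), and $P_2=u_1cdu_4$ a path of length three joining the now-adjacent vertices $u_1,u_4$. Every ear is ``a path of length $1$ or joins two adjacent vertices,'' yet $u_1u_2u_3u_4$, $u_1u_6u_5u_4$ and $u_1cdu_4$ are three internally disjoint paths of length three between $u_1$ and $u_4$: a subdivision (necessarily not an even one) of $K_{2,3}$, so by Corollary \ref{bipartite-out} the graph is not outerplanar. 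An even simpler instance: attach two length-$3$ ears across the same edge of a $4$-cycle; again all ears join adjacent vertices, and again three internally disjoint length-$3$ paths appear. So the first assertion is false as literally stated, and your induction cannot be completed without a stronger hypothesis --- e.g.\ that the edge $xy$ being paralleled by an ear still lies on the outer boundary of the current embedding, and that no two ears are attached across the same pair $\{x,y\}$. Your instinct that this is ``where the real work lies'' was sound; the honest conclusion is that the work cannot succeed for the statement as given, and the counterexample above is the concrete witness.
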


\section{Permanental polynomials of some graphs}

In the last section some characterizations and recognition of
bipartite graphs containing no even subdivision of $K_{2,3}$ are
given. We now compute the permanental polynomials of such graphs. An
algorithm is presented firstly to construct their orientations with
each cycle being oddly oriented.
\subsection{An orientation algorithm}
The following algorithm has already  been provided in \cite{Lo} to
construct Pfaffian orientations for plane graphs. Here we will show
that, for a 2-connected bipartite graph $G$ containing no even
subdivision of $K_{2,3}$,  each cycle of $G$ is oddly oriented in
any  Pfaffian orientation of it.

\begin{Algorithm}\cite{Lo} \label{algorithm}
Let $G$ be a connected plane graph.\\
1. Find a spanning tree $T$ in $G$ and orient it arbitrarily.\\
2. Let $G_{1}=T$.\\
3. If $G_{i}=G$, stop. Otherwise, take an edge $e_{i}$ of $G$ not in $G_{i}$ such that $e_{i}$ and $G_{i}$ bound
   an interior face $f_{i}$ of $G$, and orient $e_{i}$ such that an odd number of edges on the boundary of $f_{i}$ is oriented clockwise.\\
4. Set $i+1=i$ and $G_{i+1}=G_{i}\cup \{e_{i}\}$. Go to step 3.
\end{Algorithm}

\begin{thm}\label{is Pfaffian} \cite{Lo}
 Let $G$ be a connected plane graph. The orientation $G^{e}$ given by Algorithm \ref{algorithm} is a Pfaffian orientation of $G$. Such an orientation can be constructed in polynomial time.
\end{thm}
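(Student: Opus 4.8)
The plan is to split the statement into three independent tasks: that Algorithm~\ref{algorithm} is well-defined and runs in polynomial time, that the orientation it produces makes every bounded face \emph{clockwise odd}, and that clockwise-oddness of all bounded faces forces every nice cycle to be oddly oriented. For a bounded face $f$ write $c(f)$ for the number of its boundary edges whose orientation agrees with the clockwise sense around $f$, and for a cycle $C$ write $c(C)$ for the number of its edges oriented clockwise around $C$.

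First I would check well-definedness. After Step~1 the spanning tree $T$ carries $n-1$ oriented edges, and by Euler's formula a connected plane graph with $m$ edges has exactly $m-n+1$ bounded faces, matching the number of non-tree edges. At each execution of Step~3 the chosen edge $e_i$ is the \emph{only} unoriented edge on the boundary of the interior face $f_i$, since every other boundary edge already lies in $G_i$; hence the two possible orientations of $e_i$ realize the two parities for $c(f_i)$, and exactly one choice makes $c(f_i)$ odd. As each added edge completes a distinct bounded face, the loop runs $m-n+1$ times with polynomial work per step, and at termination $c(f)$ is odd for every bounded face $f$.

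The core of the argument is a counting lemma tying face parities to cycle parities. Fix a cycle $C$, and let $p$, $q$ denote the numbers of vertices and of edges strictly inside $C$, and $f_1,\dots,f_r$ the bounded faces inside $C$. I would double-count clockwise edges over $f_1,\dots,f_r$: each edge strictly inside $C$ borders two of these faces and is clockwise for exactly one of them, contributing $1$ to $\sum_j c(f_j)$, while each edge of $C$ borders a single interior face and is clockwise for it precisely when it is clockwise along $C$. This gives the exact identity $\sum_{j=1}^{r} c(f_j)=q+c(C)$. The clockwise-odd condition yields $\sum_j c(f_j)\equiv r\pmod 2$, and applying Euler's formula to $C$ together with its interior (which gives $p-q+r=1$) I obtain $c(C)\equiv p+1\pmod 2$. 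Thus $C$ is clockwise odd exactly when it encloses an even number of vertices, and for an even cycle clockwise-oddness is the same as being oddly oriented.

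It remains to show that every nice cycle encloses an even number of vertices, which is where planarity is used decisively. If $C$ is nice then $G-V(C)$ has a perfect matching $M$; since $G$ is a plane graph, $C$ is a Jordan curve and no edge of $M$ can join an interior vertex of $C$ to an exterior one without crossing $C$. Hence the $p$ interior vertices are matched among themselves, so $p$ is even, $c(C)$ is odd, and $C$ is oddly oriented, making $G^{e}$ Pfaffian. I expect the counting lemma of the third paragraph to be the main obstacle: pinning down the \emph{exact} contribution of interior versus boundary edges and combining it correctly with Euler's formula is the delicate step, whereas well-definedness and the nice-cycle argument are comparatively routine.
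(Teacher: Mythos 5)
The paper does not actually prove this theorem --- it is quoted verbatim from Lov\'asz and Plummer \cite{Lo} --- so your attempt can only be compared with the classical argument in that source, and in substance it reproduces it: the double count $\sum_{j} c(f_j) = q + c(C)$ over the faces enclosed by $C$, combined with Euler's formula $p-q+r=1$ for $C$ together with its interior, yields $c(C)\equiv p+1 \pmod 2$; and the Jordan-curve observation that a perfect matching of $G-V(C)$ must match the interior vertices among themselves forces $p$ even for every nice cycle. This is exactly Kasteleyn's parity argument as presented in \cite{Lo}, and all three of these steps are carried out correctly (note that your interior-edge count survives even the degenerate case of a bridge strictly inside $C$, which borders a single face but is traversed once in each direction, still contributing $1$).

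The one place you are too quick is well-definedness. Your count that the number of co-tree edges equals the number of bounded faces shows the loop executes $m-n+1$ times \emph{if it never stalls}, but it does not show that whenever $G_i\neq G$ there actually exists an edge $e_i$ such that $e_i$ and $G_i$ bound an interior face. This needs one further observation: the $m-n+1$ edges outside the spanning tree $T$ correspond to a spanning tree of the dual graph; rooting that dual tree at the outer face and repeatedly selecting a leaf among the bounded faces produces, at every stage, an interior face whose boundary contains exactly one unoriented edge, and deleting that leaf preserves the invariant. Since the theorem explicitly asserts constructibility (``such an orientation can be constructed in polynomial time''), this stall-freeness is part of the claim and should be argued rather than implied by an edge/face count; with that short dual-tree argument inserted, your proof is complete and coincides with the cited one.
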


\begin{thm} \label{orient thm}
     Let $G$ be a 2-connected  bipartite graph containing no even subdivision of $K_{2,3}$. Then each cycle is oddly oriented in
a Pfaffian orientation $G^{e}$ of it.
\end{thm}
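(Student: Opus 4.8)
The plan is to lean on two facts the paper has already assembled: that $G$ is planar (so a Pfaffian orientation exists at all), and that in this special class \emph{every} cycle of $G$ is nice (so the defining property of a Pfaffian orientation, which a priori constrains only nice cycles, in fact constrains all of them). The whole theorem then reduces to the observation that ``nice'' can be upgraded to ``every'' for the cycles of $G$.

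First I would invoke Lemma \ref{bipartite planar} to conclude that $G$ is planar, so that Algorithm \ref{algorithm} together with Theorem \ref{is Pfaffian} guarantees the existence of a Pfaffian orientation $G^{e}$. The substantive step is to verify that every cycle $C$ of $G$ is nice, i.e.\ that $G-V(C)$ has a perfect matching. This is precisely what the necessity direction of Theorem \ref{is 1-cycle resonant} supplies: by Lemma \ref{is bipartite ear}, $G$ admits a bipartite ear decomposition $G:=C+P_{1}+\cdots+P_{r}$ starting with the prescribed cycle $C$, and since each ear $P_{i}$ has odd length it contributes an even number of internal vertices forming a path, which can be perfectly matched within the ear. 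Assembling these matchings over all ears yields a perfect matching of $G-V(C)$, exactly as noted in the proof of Theorem \ref{is 1-cycle resonant}. Equivalently, one may read this straight off the planar 1-cycle resonant property: resonance of $C$ means $C$ is $M$-alternating for some perfect matching $M$, and then $M$ restricted to $G-V(C)$ is a perfect matching of $G-V(C)$.

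With every cycle shown to be nice, the conclusion is immediate: by the definition of a Pfaffian orientation every nice cycle of $G$ is oddly oriented in $G^{e}$, and since every cycle of $G$ is nice, every cycle is oddly oriented. I would add that nothing in the argument depends on the particular orientation chosen, so the statement holds for \emph{any} Pfaffian orientation of $G$, matching the stronger assertion made in the introduction.

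As for the main obstacle, there is no serious technical difficulty here; the one nontrivial input—that all cycles are nice—has essentially been established in proving Theorem \ref{is 1-cycle resonant}. The only point demanding care is not to conflate ``oddly oriented'' with ``Pfaffian'': the definition of a Pfaffian orientation forces \emph{nice} cycles to be oddly oriented, so the entire force of the theorem rests on the resonance (equivalently, ear-decomposition) property that makes every cycle of this graph class nice.
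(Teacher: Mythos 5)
Your proposal is correct and follows essentially the same route as the paper's own proof: the paper also argues that in a Pfaffian orientation every nice cycle is oddly oriented, and then invokes Theorem \ref{is 1-cycle resonant} to conclude that every cycle of $G$ is nice, hence oddly oriented. Your additional unpacking of why every cycle is nice (via the bipartite ear decomposition of Lemma \ref{is bipartite ear}) merely restates the necessity direction of Theorem \ref{is 1-cycle resonant}, which the paper cites wholesale.
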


\begin{proof}

Since $G^{e}$ is a Pfaffian orientation of $G$,
 each nice cycle is oddly oriented.  From Theorem \ref{is 1-cycle resonant},  each cycle of $G$ is  nice. So
each cycle of $G$ is oddly oriented in $G^{e}$.
\end{proof}

For a connected bipartite graph containing no even subdivision of
$K_{2,3}$, we can give an orientation so that  each cycle  is oddly
oriented: For each 2-connected block of $G$, we implement Algorithm
\ref{algorithm} to get its Pfaffian orientation; For each cut edge
of $G$ we  orient it arbitrarily. By Theorem \ref{orient thm} we can
see that for such an orientation of $G$ each cycle is indeed oddly
oriented.

\begin{figure}[htbp]
     \begin{center}
          \includegraphics [totalheight=4cm]{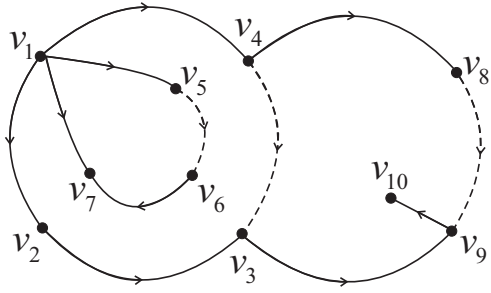}
          \caption{The graph with evenly oriented cycles.}\label{Example}
     \end{center}
\end{figure}

\begin{Remark} Implementing Algorithm \ref{algorithm} directly  to a connected plane
 bipartite graph containing no even subdivision of $K_{2,3}$, we obtain a  Pfaffian orientation, but cannot necessarily
 obtain the required orientation: an evenly oriented cycle may exist.
 See the graph $G$ in Figure \ref{Example} for example. The subgraph induced by
 the solid edges is a spanning tree $T$  with an orientation shown in Figure
 \ref{Example}. By Algorithm \ref{algorithm}, we first add the edge $v_{5}v_{6}$
 to $T$ and orient it from $v_{5}$ to $v_{6}$ so that the interior face bounded
 by it is oddly oriented.
 Then we add the edge $v_{3}v_{4}$ and  orient it
  from $v_{4}$ to $v_{3}$ so that the face boundary $v_1v_4v_3v_2v_1v_7v_6v_5v_1$ has an odd
  number of edges oriented clockwise along it. Finally  we add the edge $v_{8}v_{9}$ and  orient it
  from $v_{8}$ to $v_{9}$ so that the face boundary $v_4v_8v_9v_{10}v_9v_3v_4$ has an odd
  number of edges oriented clockwise along it. We can see that in the resulting Pfaffian orientation, the cycles
 $v_{1}v_{2}v_{3}v_{4}v_{1}$ and $v_{3}v_{4}v_{8}v_{9}v_{3}$ are evenly oriented.
\end{Remark}

\subsection{A computational approach}
Let  $G=(U, V)$ be a bipartite graph  with $|U|=|V|$. By choosing
suitable ordering of vertices, the skew adjacency matrix $A(G^{e})$
of an oriented graph $G^{e}$ has the form
$$A(G^e)=\begin{pmatrix}0  &  B \\
                    -B^{T}  &  0
\end{pmatrix},$$ where  $B$ is called the \textit{skew biadjacency matrix} of $G^{e}$. Let  $A$, $B$, $C$ and $D$ be $n\times n$
matrices with  $\mathrm{det}A\neq0$ and $AC=CA$. It is well-known that   $\mathrm{det}\begin{pmatrix}A  &  B \\
                    C  &  D
\end{pmatrix}$=
$\mathrm{det}\begin{pmatrix}AD-CB
\end{pmatrix}.$
Following from
$(xI)B^{T}=B^{T}(xI)$,
we obtain that $\mathrm{det}(xI-A(G^{e}))$ = $\mathrm{det}\begin{pmatrix}xI-\begin{pmatrix}0  &  B \\
                    -B^{T}  &  0
\end{pmatrix}\end{pmatrix}$ = $\mathrm{det}\begin{pmatrix}xI  &  -B \\
                    B^{T}  &  xI
\end{pmatrix}$ = $\mathrm{det}(x^{2}I+B^{T}B)$.  Using the result of Corollary \ref{criterion}
and Theorem \ref{orient thm}, we have the
following consequence.

\begin{thm}\label{4.4}
Let $G=(U, V)$ be a 2-connected bipartite graph  with $|U|=|V|$
containing no even subdivision of $K_{2,3}$. Then
$\pi(G,x)=\mathrm{det}(x^{2}I+B^{T}B)$, where $B$ is the skew
biadjacency matrix of a Pfaffian orientation $G^{e}$ of $G$.
\end{thm}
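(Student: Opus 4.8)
The plan is to assemble the identity from three ingredients already in place: the criterion of Corollary \ref{criterion}, the orientation result of Theorem \ref{orient thm}, and the block-matrix computation carried out in the paragraph immediately preceding the statement. First I would secure the existence of the orientation $G^e$. Since $G$ is bipartite and contains no even subdivision of $K_{2,3}$, Lemma \ref{bipartite planar} shows $G$ is planar; then Algorithm \ref{algorithm} together with Theorem \ref{is Pfaffian} produces a Pfaffian orientation $G^e$ of $G$. This is exactly the hypothesis under which the next two results operate.

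Next I would pass from the permanental polynomial to a characteristic polynomial. Because $G$ is 2-connected, bipartite, and free of even subdivisions of $K_{2,3}$, Theorem \ref{orient thm} guarantees that every cycle of $G$ is oddly oriented in this Pfaffian orientation $G^e$ (the point being that each cycle is nice, so odd orientation of nice cycles forces odd orientation of all cycles). Theorem \ref{orientation} then applies verbatim: for a bipartite graph, an orientation satisfies $\pi(G,x)=\det(xI-A(G^e))$ if and only if each of its cycles is oddly oriented. Hence $\pi(G,x)=\det(xI-A(G^e))$ for the orientation just constructed.

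Finally I would rewrite this determinant in terms of the skew biadjacency matrix. Ordering the vertices of $U$ before those of $V$ (using the assumption $|U|=|V|$ so that $B$ is square) puts $A(G^e)$ into the block form $\begin{pmatrix} 0 & B \\ -B^{T} & 0 \end{pmatrix}$. Since $xI$ commutes with $B^{T}$, the block-determinant identity recalled above, with $A=xI$, $C=B^{T}$, $D=xI$, gives $\det(xI-A(G^e))=\det\begin{pmatrix} xI & -B \\ B^{T} & xI \end{pmatrix}=\det(x^{2}I+B^{T}B)$. Chaining this with the previous step yields $\pi(G,x)=\det(x^{2}I+B^{T}B)$, as claimed.

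I do not expect a genuine obstacle: the theorem is a synthesis of machinery built in Sections 2 and 3 rather than a new difficulty. The only points meriting care are bookkeeping ones — verifying that the same orientation $G^e$ feeds all three invoked results (which holds because Theorem \ref{orient thm} applies to \emph{any} Pfaffian orientation), and checking that the commutativity hypothesis $AC=CA$ of the block-determinant formula is met, which is immediate since $A=xI$ is scalar. Thus the entire argument reduces to citing Lemma \ref{bipartite planar}, Theorem \ref{orient thm}, Theorem \ref{orientation}, and the displayed matrix manipulation in the correct order.
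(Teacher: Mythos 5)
Your proposal is correct and takes essentially the same route as the paper, which likewise obtains the result by combining Theorem \ref{orient thm} with the criterion of Corollary \ref{criterion} (i.e.\ Theorem \ref{orientation}) and the block-determinant identity $\mathrm{det}(xI-A(G^{e}))=\mathrm{det}(x^{2}I+B^{T}B)$ established in the paragraph preceding the statement. Your explicit verification that a Pfaffian orientation exists (via Lemma \ref{bipartite planar}, Algorithm \ref{algorithm} and Theorem \ref{is Pfaffian}) and that any such orientation works is careful bookkeeping the paper leaves implicit, but it does not change the argument.
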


\subsection{Examples}
We now  give some examples to compute the permanental polynomials of
 bipartite graphs containing no even subdivision of $K_{2,3}$.
\vskip 2mm
 \noindent{\bf Example 1.}  Let  $G_{1}^{s}$ be the graph with $s$
pairwise internally disjoint paths of length three joining two given
vertices. See Figure \ref{Figure8a}.  We can see that $G_{1}^{s}$ is
a bipartite graph with $|U|$=$|V|$. By Theorems \ref{is 1-cycle
resonant} and \ref{ear} we know that $G_{1}^{s}$ contains no even
subdivision of $K_{2,3}$.

\begin{figure}[htbp]
         \centering
           \subfigure[]{\label{Figure8a}
           \includegraphics[totalheight=4cm]{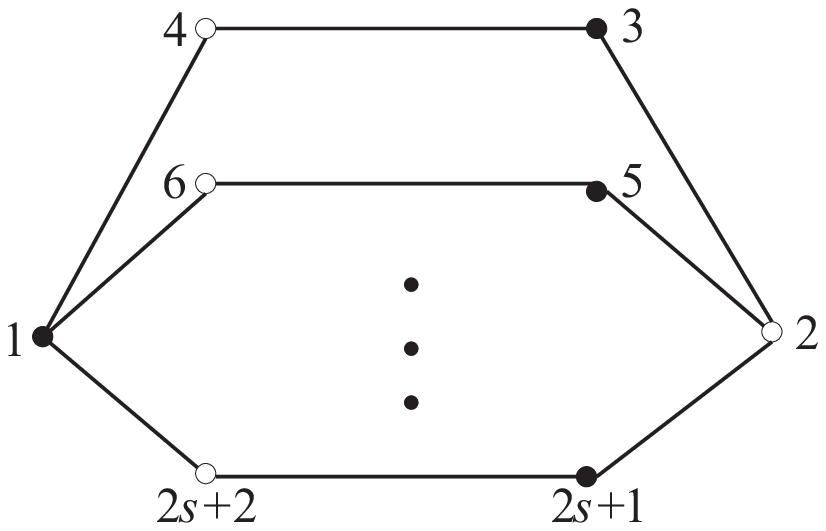}}\hspace{0.5cm}
           \subfigure[]{\label{Figure8b}
           \includegraphics[totalheight=4cm]{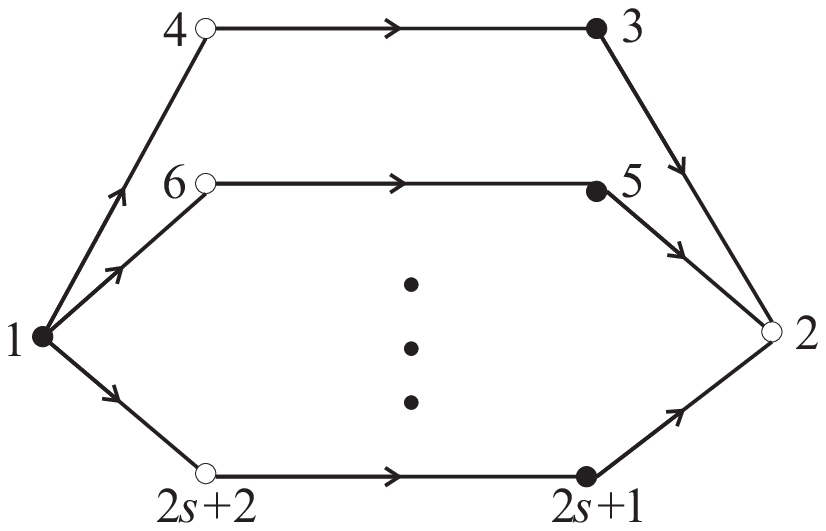}}\hspace{0.5cm}

           \caption{Graph $G_{1}^{s}$ and its Pfaffian orientation $(G_{1}^{s})^{e}$.}
\end{figure}

\begin{Lemma} \label{Dn}
$D_{n}=
        \mathrm{det}
        \begin{pmatrix}a_{1} & b & b & \cdots & b \\
               b & a_{2} & b & \cdots & b \\
               b & b & a_{3} & \cdots & b \\
              \vdots & \vdots & \vdots & \ddots & \vdots\\
               b & b & b & \cdots & a_{n}
        \end{pmatrix}$=
         $(1+b\sum_{i=1}^{n}\frac{1}{a_{i}-b})\prod_{i=1}^{n}(a_{i}-b)$,
         where $b\neq a_{i}, i=1, 2, \cdots, n$.
\end{Lemma}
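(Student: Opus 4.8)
The plan is to recognize that the matrix in question is a rank-one perturbation of an invertible diagonal matrix and to apply the matrix determinant lemma. Writing $\Lambda=\mathrm{diag}(a_{1}-b,\ldots,a_{n}-b)$ and letting $J=\mathbf{1}\mathbf{1}^{T}$ be the $n\times n$ all-ones matrix (with $\mathbf{1}$ the all-ones column vector), the given matrix is exactly $\Lambda+bJ$, since its diagonal entries are $(a_{i}-b)+b=a_{i}$ and its off-diagonal entries are $b$. The hypothesis $b\neq a_{i}$ guarantees that $\Lambda$ is invertible, with $\Lambda^{-1}=\mathrm{diag}\big(\tfrac{1}{a_{1}-b},\ldots,\tfrac{1}{a_{n}-b}\big)$, so this decomposition is the natural starting point.

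First I would factor the determinant as $D_{n}=\mathrm{det}(\Lambda+b\,\mathbf{1}\mathbf{1}^{T})$ and apply the identity $\mathrm{det}(\Lambda+uv^{T})=\mathrm{det}(\Lambda)\,(1+v^{T}\Lambda^{-1}u)$, valid for invertible $\Lambda$, with $u=b\,\mathbf{1}$ and $v=\mathbf{1}$. Then $\mathrm{det}(\Lambda)=\prod_{i=1}^{n}(a_{i}-b)$, while $v^{T}\Lambda^{-1}u=b\,\mathbf{1}^{T}\Lambda^{-1}\mathbf{1}=b\sum_{i=1}^{n}\tfrac{1}{a_{i}-b}$, since $\mathbf{1}^{T}\Lambda^{-1}\mathbf{1}$ is simply the sum of the diagonal entries of $\Lambda^{-1}$. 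Multiplying these two factors yields exactly $\big(1+b\sum_{i=1}^{n}\tfrac{1}{a_{i}-b}\big)\prod_{i=1}^{n}(a_{i}-b)$, which is the claimed formula.

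For readers who prefer a self-contained argument not invoking the matrix determinant lemma, I would instead derive that identity itself by bordering. Consider
$$\mathrm{det}\begin{pmatrix}\Lambda & u\\ -v^{T} & 1\end{pmatrix}.$$
Eliminating the $\Lambda$ block by a Schur-complement expansion (using that $\Lambda$ is invertible) evaluates it as $\mathrm{det}(\Lambda)\,(1+v^{T}\Lambda^{-1}u)$, while eliminating the scalar block evaluates it as $\mathrm{det}(\Lambda+uv^{T})$; equating the two gives the identity. Alternatively one can argue by induction on $n$, subtracting the last row from each of the others to expose a nearly triangular structure and expanding along the last column, which avoids the rank-one viewpoint entirely.

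The computation is routine and I do not anticipate a genuine obstacle; the only point requiring care is the standing assumption $b\neq a_{i}$ for all $i$, which is precisely what makes $\Lambda$ invertible and keeps every denominator $a_{i}-b$ nonzero, so that both the matrix determinant lemma and the final expression are well defined.
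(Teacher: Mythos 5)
Your proposal is correct, and in substance it matches the paper's proof; the only difference is packaging. Your headline route writes the matrix as $\Lambda+b\,\mathbf{1}\mathbf{1}^{T}$ with $\Lambda=\mathrm{diag}(a_{1}-b,\ldots,a_{n}-b)$ and cites the matrix determinant lemma, correctly obtaining $\det(\Lambda)\bigl(1+b\,\mathbf{1}^{T}\Lambda^{-1}\mathbf{1}\bigr)$, with the hypothesis $b\neq a_{i}$ doing exactly the work you say it does. The paper never names that lemma; instead it performs by hand precisely the bordering computation you offer as your self-contained alternative: it embeds $D_{n}$ in the $(n+1)\times(n+1)$ determinant with first row $(1,b,\ldots,b)$ and first column $(1,0,\ldots,0)^{T}$, subtracts the first row from each of the others to reach the bordered form $\left(\begin{smallmatrix}1 & b\,\mathbf{1}^{T}\\ -\mathbf{1} & \Lambda\end{smallmatrix}\right)$, and then clears the $-1$'s by adding $\tfrac{1}{a_{i}-b}$ times each later column to the first, leaving an upper triangular matrix whose diagonal product is the claimed formula --- which is exactly a Schur-complement elimination written out as elementary column operations. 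What each version buys: yours is one line once the rank-one structure is recognized, but presupposes (or must re-derive) the identity; the paper's is longer but entirely elementary and self-contained. Your suggested induction variant would also work but is unnecessary given the other two arguments.
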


\begin{proof}
$D_{n}=
\mathrm{det}
\begin{pmatrix}1 & b & b & b & \cdots & b \\
               0 & a_{1} & b & b & \cdots & b \\
               0 & b & a_{2} & b & \cdots & b \\
               0 & b & b & a_{3} & \cdots & b \\
               \vdots & \vdots & \vdots & \vdots & \ddots & \vdots\\
               0 & b & b & b & \cdots & a_{n}
\end{pmatrix}$=
$\mathrm{det}
\begin{pmatrix}1 & b & b & b & \cdots & b \\
               -1 & a_{1}-b & 0 & 0 & \cdots & 0 \\
               -1 & 0 & a_{2}-b & 0 & \cdots & 0 \\
               -1 & 0 & 0 & a_{3}-b & \cdots & 0 \\
               \vdots & \vdots & \vdots & \vdots & \ddots & \vdots\\
               -1 & 0 & 0 & 0 & \cdots & a_{n}-b
\end{pmatrix}$

=$\mathrm{det}
\begin{pmatrix}1+\frac{b}{a_{1}-b}+\cdots+\frac{b}{a_{n}-b} & b & b & b & \cdots & b \\
                & a_{1}-b &  &  &  &  \\
                &  & a_{2}-b &  &  & \mbox{\Huge 0} \\
                &  &  & a_{3}-b &  &  \\
  \mbox{\Huge 0} &  &  &  & \ddots & \\
                &  &  &  &  & a_{n}-b
\end{pmatrix}$

=$(1+b\sum_{i=1}^{n}\frac{1}{a_{i}-b})\prod_{i=1}^{n}(a_{i}-b)$.
\end{proof}

\begin{thm}
$\pi(G_{1}^{s},x)=(1+\frac{1}{x^{2}+s-1}+\frac{s}{x^{2}+1})(x^{2}+s-1)(x^{2}+1)^{s}$.
\end{thm}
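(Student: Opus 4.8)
The plan is to invoke Theorem~\ref{4.4}. The text of Example~1 already records that $G_1^s$ is a $2$-connected bipartite graph with $|U|=|V|=s+1$ containing no even subdivision of $K_{2,3}$, so $\pi(G_1^s,x)=\det(x^2I+B^TB)$ for the skew biadjacency matrix $B$ of a Pfaffian orientation $(G_1^s)^e$. The entire computation then reduces to writing $B$ down explicitly, forming $B^TB$, and evaluating a single $(s+1)\times(s+1)$ determinant; the last step is exactly what Lemma~\ref{Dn} was prepared for.

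First I would fix a convenient orientation. Denote the two vertices of degree $s$ by $u$ and $w$, and write the $i$-th path as $u\,p_i\,q_i\,w$, so that $U=\{u,q_1,\dots,q_s\}$ and $V=\{w,p_1,\dots,p_s\}$. Orienting each edge forward along its path, $u\to p_i$, $p_i\to q_i$, $q_i\to w$, makes every $6$-cycle $u\,p_i\,q_i\,w\,q_j\,p_j\,u$ receive exactly three edges agreeing with a fixed sense of traversal, hence oddly oriented; as every cycle of $G_1^s$ has this form, this is the required orientation (in agreement with Theorem~\ref{orient thm}). Indexing rows by $U$ and columns by $V$, one reads off the symmetric matrix
\[
B=\begin{pmatrix}0 & \mathbf{1}^{T}\\ \mathbf{1} & -I_s\end{pmatrix},
\]
where $\mathbf{1}$ is the all-ones column of length $s$ and $I_s$ the identity.

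Next I would compute
\[
B^{T}B=B^{2}=\begin{pmatrix}s & -\mathbf{1}^{T}\\ -\mathbf{1} & I_s+J_s\end{pmatrix},\qquad
x^{2}I+B^{T}B=\begin{pmatrix}x^{2}+s & -\mathbf{1}^{T}\\ -\mathbf{1} & (x^{2}+1)I_s+J_s\end{pmatrix},
\]
with $J_s$ the all-ones $s\times s$ matrix. Multiplying the first row and the first column each by $-1$ leaves the determinant unchanged and converts every off-diagonal entry to $+1$, so that $x^2I+B^TB$ takes precisely the shape of Lemma~\ref{Dn} of order $s+1$ with $b=1$, diagonal entries $a_1=x^2+s$ and $a_2=\cdots=a_{s+1}=x^2+2$. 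Since $a_1-b=x^2+s-1$ and $a_i-b=x^2+1$ for $i\ge2$, the lemma gives $\det(x^2I+B^TB)=\bigl(1+\tfrac{1}{x^2+s-1}+\tfrac{s}{x^2+1}\bigr)(x^2+s-1)(x^2+1)^s$, which is the claimed value of $\pi(G_1^s,x)$.

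The main obstacle is purely the bookkeeping at the start: getting the bipartition and a legitimate all-cycles-oddly-oriented orientation right so that $B$ comes out in the clean block form above, and then noticing that the harmless sign flip on the first row and column lands the matrix exactly in the $D_n$ pattern. If one preferred to avoid Lemma~\ref{Dn}, the same determinant could be obtained by a Schur-complement (Sherman--Morrison) reduction on the block $(x^2+1)I_s+J_s$, but the reduction to Lemma~\ref{Dn} is cleaner and is evidently the intended route.
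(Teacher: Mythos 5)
Your proposal is correct and takes essentially the same route as the paper's proof: the same orientation of the $s$ paths from one degree-$s$ vertex to the other, the same skew biadjacency matrix $B$ (the paper writes it entrywise, you in block form $\begin{pmatrix}0 & \mathbf{1}^{T}\\ \mathbf{1} & -I_s\end{pmatrix}$), the same matrix $x^{2}I+B^{T}B$, and the same sign flip on the first row and column to land in the pattern of Lemma~\ref{Dn} with $b=1$, $a_{1}=x^{2}+s$ and $a_{i}=x^{2}+2$ for $i\geq 2$. Your explicit check that every $6$-cycle receives exactly three edges in each direction is a minor addition that the paper leaves implicit in its choice of Pfaffian orientation.
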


\begin{proof} We choose a Pfaffian orientation $(G_{1}^{s})^{e}$ so that
each of the $s$ paths is oriented as a directed path from vertex 1 to
vertex 2 (see Figure \ref{Figure8b}). Since $G$ is 2-connected and contains
no even subdivision of $K_{2,3}$, by Theorem \ref{4.4} we have that
$\pi(G_{1}^{s},x)=\mathrm{det}(x^{2}I+B^{T}B)$, where
$B_{(s+1)\times(s+1)}$ the skew biadjacency matrix of
$(G_{1}^{s})^{e}$. As the labeling of vertices in Figure
\ref{Figure8b}, the skew biadjacency matrix $B$ has the following
form

$$B=\begin{pmatrix}0 & 1 & 1 & \cdots & 1 \\
                  1 &-1 & 0 & \cdots & 0 \\
                  1 & 0 &-1 & \cdots & 0 \\
                  \vdots & \vdots & \vdots & \ddots & \vdots\\
                  1 & 0 & 0 & \cdots & -1
\end{pmatrix},  \,\,\mbox{ and} \,\,
B^{T}B=\begin{pmatrix}s & -1 & -1 & \cdots & -1 \\
                      -1 & 2 & 1 & \cdots & 1 \\
                      -1 & 1 & 2 & \cdots & 1 \\
                      \vdots & \vdots & \vdots & \ddots & \vdots\\
                      -1 & 1 & 1 & \cdots & 2
\end{pmatrix}.$$
Hence
$\mathrm{det}(x^{2}I+B^{T}B)$\\
$=\mathrm{det}
\begin{pmatrix}x^{2}+s & -1 & -1 & \cdots & -1 \\
               -1 & x^{2}+2 & 1 & \cdots & 1 \\
               -1 & 1 & x^{2}+2 & \cdots & 1 \\
              \vdots & \vdots & \vdots & \ddots & \vdots\\
               -1 & 1 & 1 & \cdots & x^{2}+2
\end{pmatrix}$
=$\mathrm{det}\begin{pmatrix}x^{2}+s & 1 & 1 & \cdots & 1 \\
               1 & x^{2}+2 & 1 & \cdots & 1 \\
               1 & 1 & x^{2}+2 & \cdots & 1 \\
              \vdots & \vdots & \vdots & \ddots & \vdots\\
               1 & 1 & 1 & \cdots & x^{2}+2
\end{pmatrix}$\\
\\
$=(1+\frac{1}{x^{2}+s-1}+\frac{s}{x^{2}+1})(x^{2}+s-1)(x^{2}+1)^{s}$,
by Lemma \ref{Dn}.
\end{proof}

\noindent{\bf Example 2.} Let $G_{2}^{r}$=$(K_{1,r}\times
K_{2})^{*}$ be obtained from the Cartesian product $K_{1,r}\times
K_{2}$ by adding paths of length three joining the adjacent vertices
$u$ and $v$ of $K_{1,r}\times K_{2}$ with $u\in
V(K_{1,r}^{1})-\{x\}$ and $v\in V(K_{1,r}^{2})-\{x\}$ ($K_{1,r}^{1}$
and $K_{1,r}^{2}$ are the two copies of $K_{1,r}$ in $K_{1,r}\times
K_{2}$ and $x$ is the vertex of degree r in $K_{1,r}$). See Figure
\ref{Figure9a}. Similar to Example 1,  $G_{2}^{r}$ is a bipartite
graph with $|U|$=$|V|$ and contains no even subdivision of
$K_{2,3}$.

\begin{figure}[htbp]
         \centering
           \subfigure[]{\label{Figure9a}
           \includegraphics[totalheight=5cm]{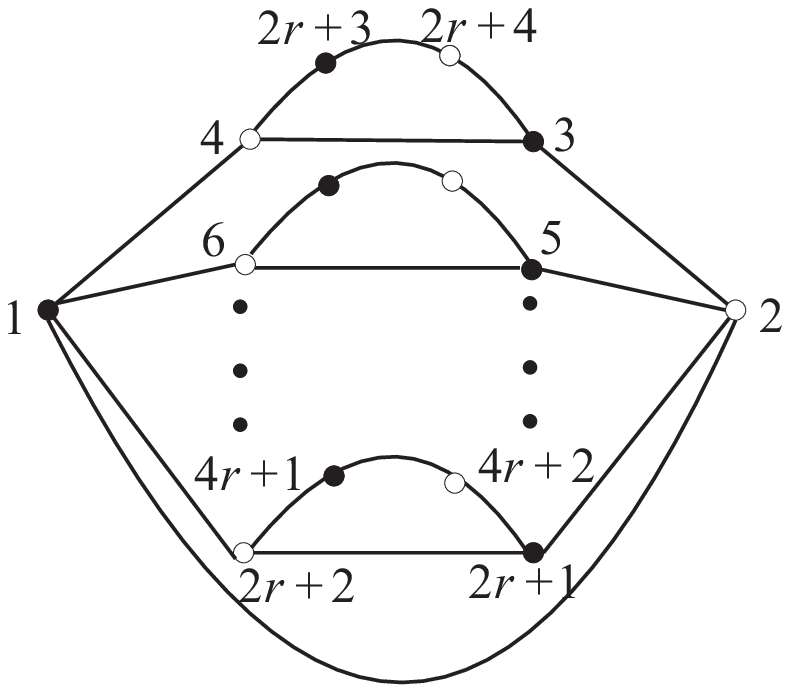}}\hspace{0.5cm}
           \subfigure[]{\label{Figure9b}
           \includegraphics[totalheight=5cm]{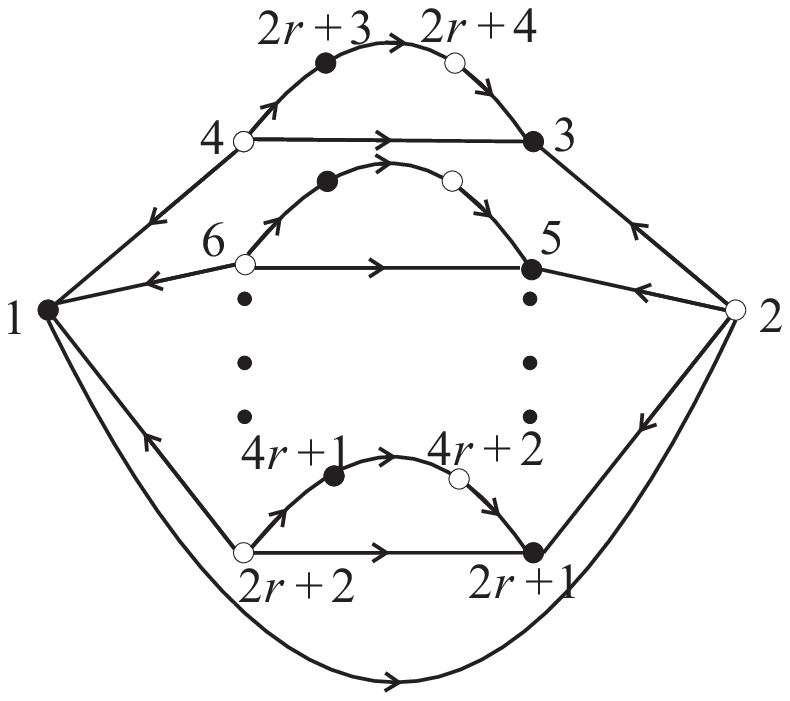}}\hspace{0.5cm}

           \caption{Graph $G_{2}^{r}$ and its Pfaffian orientation $(G_{2}^{r})^{e}$.}
\end{figure}

\begin{thm}
         $\pi(G_{2}^{r},x)=(1+\frac{r}{x^{2}+2})(x^{2}+2)^{2r-1}[x^{4}+(3+r)x^{2}+r+2]$.
\end{thm}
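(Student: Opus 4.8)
The plan is to invoke Theorem \ref{4.4} to convert the permanental polynomial into an ordinary determinant, and then to evaluate that determinant by exploiting the high symmetry of $G_2^r$. As already noted in the text preceding the statement, $G_2^r$ is a $2$-connected bipartite graph with $|U|=|V|=2r+1$ that contains no even subdivision of $K_{2,3}$ (this follows from Theorems \ref{is 1-cycle resonant} and \ref{ear}). Hence Theorem \ref{4.4} applies and gives $\pi(G_2^r,x)=\det(x^2 I+B^{T}B)$, where $B$ is the skew biadjacency matrix of the Pfaffian orientation $(G_2^r)^e$ depicted in Figure \ref{Figure9b}. So the whole problem reduces to computing this one $(2r+1)\times(2r+1)$ determinant.

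First I would write $B$ down explicitly from Figure \ref{Figure9b}. Labeling the two center vertices and then the $r$ identical \emph{gadgets} (each gadget being the $4$-cycle formed by the edge $u_i^1 u_i^2$ together with the subdividing path $P_i$ of length three), $B$ takes a bordered block-diagonal shape: one distinguished row and column coming from the centers $x_1,x_2$, and $r$ identical $2\times 2$ diagonal blocks coming from the gadgets, each coupled to the border. Forming $M:=x^2 I+B^{T}B$ then yields a symmetric matrix with the same bordered block-diagonal pattern and, crucially, a symmetry that permutes the $r$ gadget-blocks among themselves while fixing the center.

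The evaluation of $\det M$ I would organize around this permutation symmetry (equivalently, a Schur-complement or block version of Lemma \ref{Dn}). Decomposing the gadget coordinates into the \emph{symmetric mode} (the all-equal combination, which is the only one coupling to the center) and the $r-1$ orthogonal \emph{difference modes} (which decouple from the center) block-diagonalizes $M$ into $r-1$ copies of one fixed $2\times 2$ difference block together with a single residual $3\times 3$ block built from the center and the symmetric gadget mode. I expect each difference block to contribute the factor $(x^2+2)^2$, so that the $r-1$ of them together give $(x^2+2)^{2r-2}$, while the $3\times 3$ block contributes $(x^2+r+2)\,[x^4+(3+r)x^2+r+2]$. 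Multiplying and using the identity $(x^2+2)^{2r-2}(x^2+r+2)=(1+\tfrac{r}{x^2+2})(x^2+2)^{2r-1}$ would then produce exactly the claimed expression; a degree check ($2(r-1)+3=2r+1=|V|$) confirms the bookkeeping is consistent.

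The hard part will be the bookkeeping rather than any single idea: fixing the signs in $B$ so that $(G_2^r)^e$ is genuinely a Pfaffian (oddly oriented) orientation—so that Theorem \ref{4.4} really does apply—and then carrying out the symmetry reduction carefully enough that the bordered block structure collapses to precisely the stated factors. In particular I would need to verify that each difference block really is $(x^2+2)^2$ and that the residual central block really does factor as $(x^2+r+2)[x^4+(3+r)x^2+r+2]$; these two explicit low-order determinant computations are where an error would most easily creep in.
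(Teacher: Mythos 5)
Your proposal is correct, and while it shares the paper's skeleton---verify that $G_2^r$ is $2$-connected bipartite with no even subdivision of $K_{2,3}$, invoke Theorem \ref{4.4} with the Pfaffian orientation of Figure \ref{Figure9b}, and reduce everything to $\det(x^2I+B^TB)$ for the same $(2r+1)\times(2r+1)$ matrix $B$---it evaluates that determinant by a genuinely different route. The paper works by hand: explicit row/column operations on the displayed order-$(2r+1)$ matrix, an expansion of the bordered determinant into a sum of two products of smaller determinants, and repeated use of Lemma \ref{Dn}. You instead block-diagonalize via the $S_r$ symmetry permuting the gadgets, and your predicted factors do check out: with the paper's $B$ one finds that $B^TB$ has \emph{zero} coupling both between the two coordinates of each gadget and between the center coordinate and the $(2I+J)$-block, so that
\[
x^2I+B^TB=\begin{pmatrix} x^2+r+1 & 0 & \mathbf{1}^T\\ 0 & (x^2+2)I_r+J & 0\\ \mathbf{1} & 0 & (x^2+2)I_r \end{pmatrix},
\]
where $J$ is the all-ones matrix. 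Hence each of your $r-1$ difference modes yields the block $\mathrm{diag}(x^2+2,\,x^2+2)$, contributing $(x^2+2)^{2r-2}$ in total, and the residual block satisfies
\[
\det\begin{pmatrix} x^2+r+1 & 0 & \sqrt r\\ 0 & x^2+r+2 & 0\\ \sqrt r & 0 & x^2+2 \end{pmatrix}=(x^2+r+2)\bigl[x^4+(3+r)x^2+r+2\bigr],
\]
exactly as you claim; your final identity $(x^2+2)^{2r-2}(x^2+r+2)=(1+\frac{r}{x^2+2})(x^2+2)^{2r-1}$ is also right, as is the dimension count $2(r-1)+3=2r+1$. Comparing the two: the paper's computation stays within elementary determinant manipulations and recycles the lemma already proved for Example 1, at the cost of rather opaque intermediate matrices; your symmetry-adapted basis makes the multiplicity $(x^2+2)^{2r-2}$ conceptually transparent (it is precisely the contribution of the $r-1$ difference modes) and would generalize to any graph built from identical gadgets attached symmetrically to a center. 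In fact, once the vanishing couplings above are noticed, the task is even easier than your plan anticipates: the $(x^2+2)I_r+J$ block decouples completely, so $\det(x^2I+B^TB)$ splits as $\det((x^2+2)I_r+J)$ times a bordered $(r+1)\times(r+1)$ determinant, each handled by Lemma \ref{Dn} or a one-line Schur complement, with no symmetry decomposition needed at all.
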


\begin{proof} Orientate  $G_{2}^{r}$ as follows. Direct the edges in the star $K_{1,r}^{1}$
from the vertices of degree one to the vertex of degree $r$ and the
        edges in the star
        $K_{1,r}^{2}$ receive the reverse direction;  Edges joining $K_{1,r}^{1}$ and $K_{1,r}^{2}$ are directed
        from $K_{1,r}^{1}$ to $K_{1,r}^{2}$; Each path of length three added to $K_{1,r}\times K_{2}$ is oriented
        as a directed path from  $K_{1,r}^{1}$ to  $K_{1,r}^{2}$ (see Figure \ref{Figure9b}). It can be easily
        checked that each face cycle is oddly oriented. Hence $(G_{2}^{r})^e$ is a Pfaffian orientation.
        Since  $G_{2}^{r}$ is 2-connected and contains no even subdivision of $K_{2,3}$,
        from Theorems \ref{4.4} we obtain that
        $\pi(G_{2}^{r},x)=\mathrm{det}(x^{2}I+B^{T}B)$, where the $(2r+1)\times (2r+1)$ matrix $B$ is the
        skew biadjacency matrix of  $(G_{2}^{r})^{e}$.
        By labeling the vertices of $(G_{2}^{r})^{e}$ as shown in Figure \ref{Figure9b}, we obtain that

        $$B=\begin{pmatrix}
                  1 & -1 & -1 & \cdots & -1 & 0 & 0 & \cdots & 0\\
                  -1 & -1 & 0 & \cdots & 0 & -1 & 0 & \cdots & 0\\
                  -1 & 0 & -1 & \cdots & 0 & 0 &  -1 & \cdots & 0\\
\vdots & \vdots & \vdots & \ddots & \vdots & \vdots & \vdots & \ddots & \vdots\\
                  -1 & 0 & 0 & \cdots & -1 & 0 &  0 & \cdots & -1\\
                  0 & -1 & 0 & \cdots & 0 & 1 & 0 & \cdots & 0\\
                  0 & 0 & -1 & \cdots & 0 & 0 & 1 &  \cdots & 0\\
\vdots & \vdots & \vdots & \ddots & \vdots & \vdots & \vdots & \ddots & \vdots\\
                  0 & 0 & 0 & \cdots & -1 & 0 & 0 & \cdots & 1\\
       \end{pmatrix}$$   and
  $$B^{T}B=\begin{pmatrix}
                  r+1 & 0 & 0 & \cdots & 0 & 1 & 1 & \cdots & 1\\
                  0 & 3 & 1 & \cdots & 1 & 0 & 0 & \cdots & 0\\
                  0 & 1 & 3 & \cdots & 1 & 0 & 0 &\cdots  & 0\\
\vdots  & \vdots & \vdots & \ddots &  \vdots & \vdots & \vdots & \ddots & \vdots\\
                  0 & 1 & 1 & \cdots & 3 & 0 & 0 & \cdots & 0\\
                  1 & 0 & 0 & \cdots & 0 & 2 & 0 & \cdots & 0\\
                  1 & 0 & 0 & \cdots & 0 & 0 & 2 & \cdots & 0\\
\vdots & \vdots &  \vdots & \ddots & \vdots & \vdots & \vdots &\ddots & \vdots\\
                  1 & 0 & 0 & \cdots & 0 & 0 & 0 & \cdots & 2\\
       \end{pmatrix}.$$

By the properties of determinants, we have that
$$\mathrm{det}(x^{2}I+B^{T}B)
       =\mathrm{det}\begin{pmatrix}
                  x^{2}+(r+1) & 0 & 0 & \cdots & 0 & 1 & 1 & \cdots & 1\\
                  0 & x^{2}+3 & 1 & \cdots & 1 & 0 & 0 &  \cdots & 0\\
                  0 & 1 & x^{2}+3 & \cdots & 1 & 0 & 0 & \cdots  & 0\\
                  \vdots  & \vdots & \vdots & \ddots &  \vdots & \vdots & \vdots & & \vdots\\
                  0 & 1 & 1 & \cdots & x^{2}+3 & 0 & 0 & \cdots & 0\\
                  1 & 0 & 0 & \cdots & 0 & x^{2}+2 & 0 & \cdots & 0\\
                  1 & 0 & 0 & \cdots & 0 & 0 & x^{2}+2 & \cdots & 0\\
                  \vdots & \vdots &  \vdots &  & \vdots & \vdots & \vdots  & \ddots & \vdots\\
                  1 & 0 & 0 & \cdots & 0 & 0 & 0 & \cdots & x^{2}+2\\
       \end{pmatrix}$$\\
       $=\mathrm{det}\begin{pmatrix}
                  x^{2}+(r+1) & 0 & 0 & \cdots & 0 & 0 & 0 & \cdots & 1\\
                  0 & x^{2}+3 & 1 & \cdots & 1 & 0 & 0 &  \cdots & 0\\
                  0 & 1 & x^{2}+3 & \cdots & 1 & 0 & 0 & \cdots  & 0\\
                  \vdots  & \vdots & \vdots & \ddots & \vdots & \vdots & \vdots &  & \vdots\\
                  0 & 1 & 1 & \cdots & x^{2}+3 & 0 & 0 & \cdots & 0\\
                  0 & 0 & 0 & \cdots & 0 & 2(x^{2}+2) & x^{2}+2 & \cdots & -x^{2}-2\\
                  0 & 0 & 0 & \cdots & 0 & x^{2}+2 & 2(x^{2}+2) & \cdots & -x^{2}-2\\
                  \vdots & \vdots &  \vdots &  & \vdots & \vdots & \vdots &\ddots & \vdots\\
                  1 & 0 & 0 & \cdots & 0 & -x^{2}-2 & -x^{2}-2 & \cdots & x^{2}+2
       \end{pmatrix}$\medskip

       (Since $B$ and $B^{T}B$ are of order $2r+1$, the two matrices as above are also of
       order $2r+1$.) \medskip\\
      $=(x^{2}+r+1)\mathrm{det}
       \begin{pmatrix}
                   x^{2}+3 & 1 & \cdots & 1 \\
                   1 & x^{2}+3 & \cdots & 1 \\
                   \vdots & \vdots & \ddots & \vdots\\
                   1 & 1 & \cdots & x^{2}+3
       \end{pmatrix}
       \mathrm{det}
       \begin{pmatrix}
                  2(x^{2}+2) & x^{2}+2 & \cdots & -x^{2}-2\\
                  x^{2}+2 & 2(x^{2}+2) & \cdots & -x^{2}-2\\
                  \vdots & \vdots  &\ddots & \vdots\\
                  -x^{2}-2 & -x^{2}-2 & \cdots & x^{2}+2
       \end{pmatrix}$\medskip\\
       $-
       \mathrm{det}
       \begin{pmatrix}
                  x^{2}+3 & 1 & \cdots & 1 \\
                   1 & x^{2}+3 & \cdots & 1 \\
                   \vdots & \vdots & \ddots & \vdots\\
                   1 & 1 & \cdots & x^{2}+3
       \end{pmatrix}
       \mathrm{det}
       \begin{pmatrix}
                  2(x^{2}+2) & x^{2}+2 & \cdots & x^{2}+2\\
                  x^{2}+2 & 2(x^{2}+2) & \cdots & x^{2}+2\\
                  \vdots & \vdots  &\ddots & \vdots\\
                  x^{2}+2 & x^{2}+2 & \cdots & 2(x^{2}+2)
       \end{pmatrix}$\medskip

    \noindent (The first three matrices as above are of order $r$ and
     the last one is of order $r-1$;   Lemma \ref{Dn} is used repeatedly)\medskip\\
       $=(x^{2}+r+1)(1+\frac{r}{x^{2}+2})(x^{2}+2)^{r}
       \mathrm{det}
       \begin{pmatrix}
                  x^{2}+2 & 0 & \cdots & -x^{2}-2\\
                  0 & x^{2}+2 & \cdots & -x^{2}-2\\
                  \vdots & \vdots  &\ddots & \vdots\\
                  0 & 0 & \cdots & x^{2}+2
       \end{pmatrix}$
       $-r(1+\frac{r}{x^{2}+2})(x^{2}+2)^{2r-1}$\medskip\\
       $=(x^{2}+r+1)(1+\frac{r}{x^{2}+2})(x^{2}+2)^{2r}
       -r(1+\frac{r}{x^{2}+2})(x^{2}+2)^{2r-1}$\medskip\\
       $=(1+\frac{r}{x^{2}+2})(x^{2}+2)^{2r-1}[x^{4}+(3+r)x^{2}+r+2]$.
\end{proof}

\noindent{\bf Example 3.} We consider outerplanar bipartite graphs.
For example, see Figure \ref{Figure10a}. If all the polygons in this
graph are hexagons, then the resulting graph is a catacondensed
hexagonal system  (see Figure \ref{Figure10b}).

\begin{figure}[htbp]
         \centering
           \subfigure[]{\label{Figure10a}
           \includegraphics[totalheight=4cm]{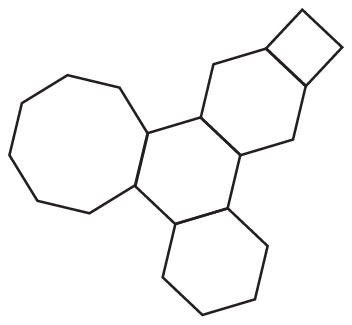}}\hspace{0.5cm}
           \subfigure[]{\label{Figure10b}
           \includegraphics[totalheight=4cm]{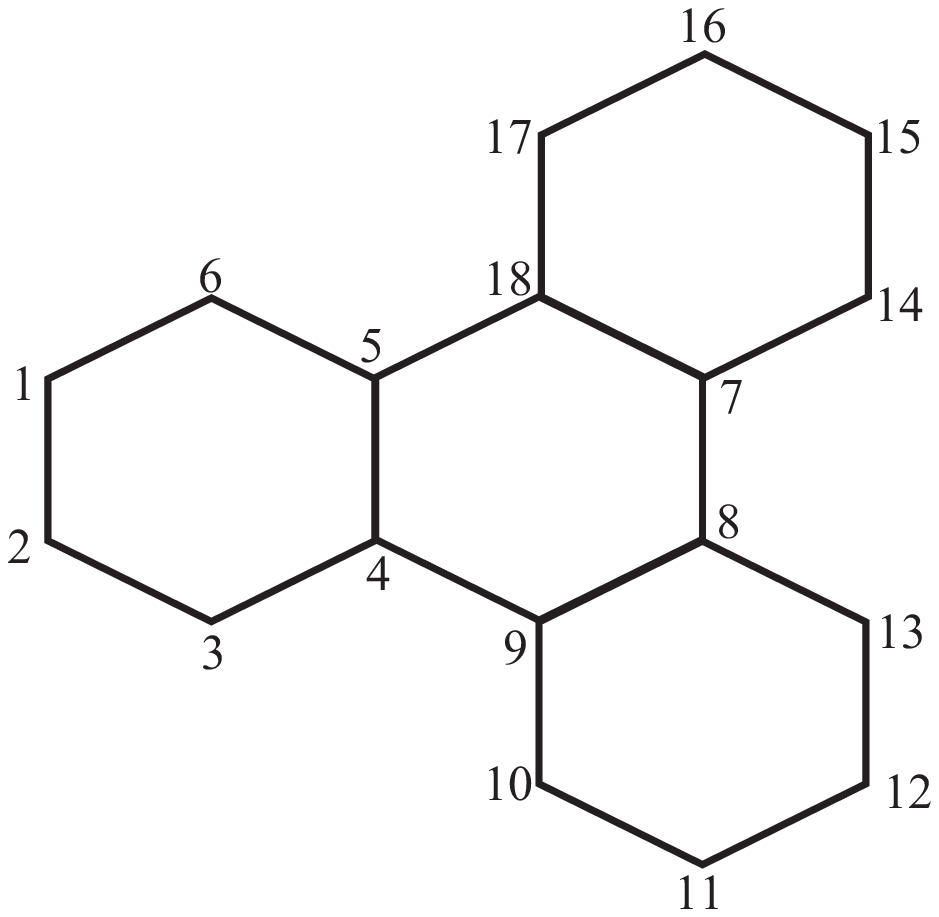}}\hspace{0.5cm}
           \subfigure[]{\label{Figure10c}
           \includegraphics[totalheight=4cm]{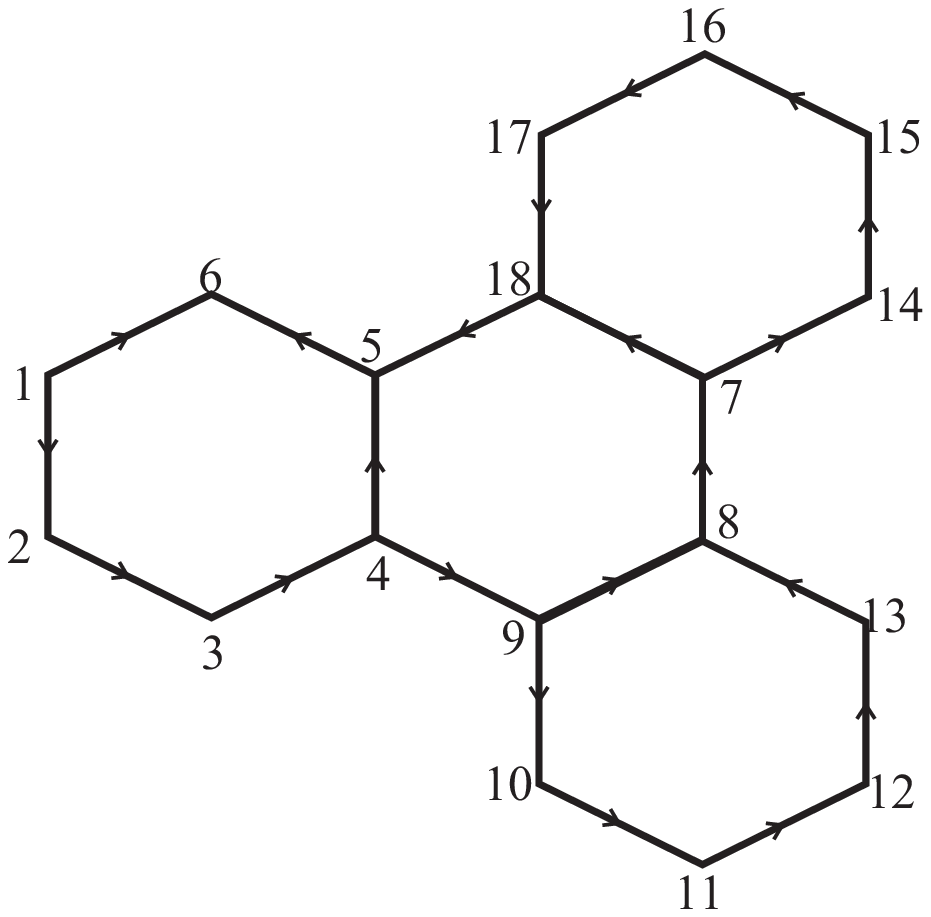}}\hspace{0.5cm}

           \caption{(a) An Outerplanar graph, (b) a catacondensed hexagonal system, and (c) an Pfaffian orientation.}
\end{figure}

Let $H$ be the graph in Figure \ref{Figure10b} with an orientation
$H^{e}$ in Figure \ref{Figure10c}  with each cycle oddly oriented.
Let $A(H^{e})$ be the skew adjacency matrix of $H^{e}$. We have that
$\pi(H,x)=\mathrm{det}(xI-A(H^{e}))$ by Theorem  \ref{no K4} and
Corollary \ref{criterion}. After computation we obtain that
$\pi(H,x)= 81+648x^{2}+2106x^{4}+3627x^{6}+
3645x^{8}+2223x^{10}+825x^{12}+180x^{14}+21x^{16}+x^{18}.$

 Borowiechi \cite{M.} ever proved that if a bipartite graph $G$ contains no cycle of length
$4s$, $s\in \{1, 2, \cdots\}$, and the characteristic polynomial
$\phi(G, x)=\Sigma_{k=0}^{[n/2]}(-1)^{k}a_{2k}x^{n-2k}$, then
$\pi(G,x)=\Sigma_{k=0}^{[n/2]}a_{2k}x^{n-2k}$. For example,
we can compute $\phi(H,
x)=-81+648x^{2}-2106x^{4}+3627x^{6}-3645x^{8}+2223x^{10}-825x^{12}
+180x^{14}-21x^{16}+x^{18}$. Hence the permanental polynomials of
the catacondensed hexagonal system can be computed in such two
methods, since they contains no cycle of length $4s$.

\begin{figure}[htbp]
\begin{center}
\subfigure[]{\label{Figure11a}
           \includegraphics[totalheight=4cm]{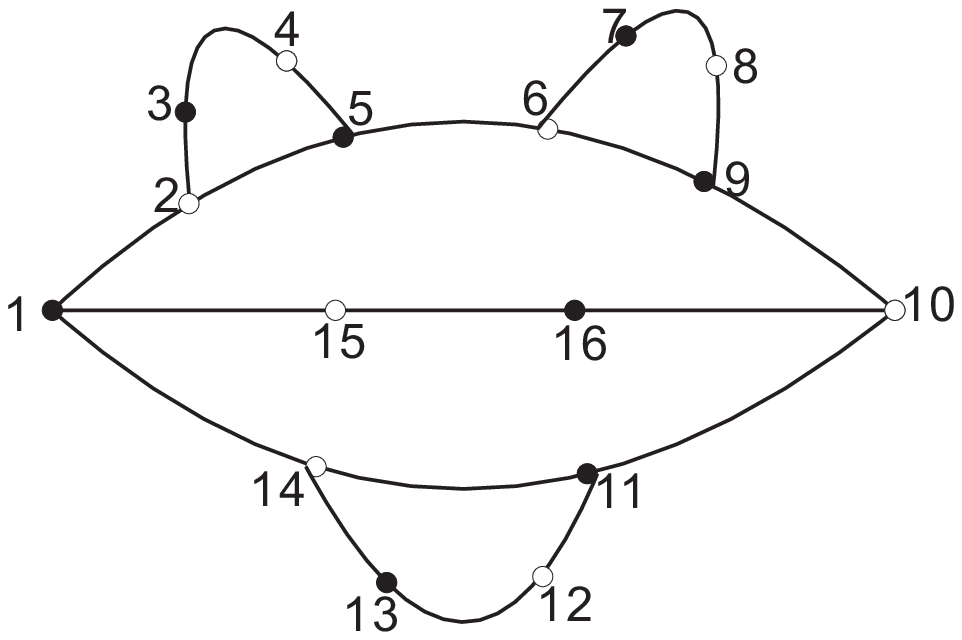}}\hspace{0.5cm}
           \subfigure[]{\label{Figure11b}
           \includegraphics[totalheight=4cm]{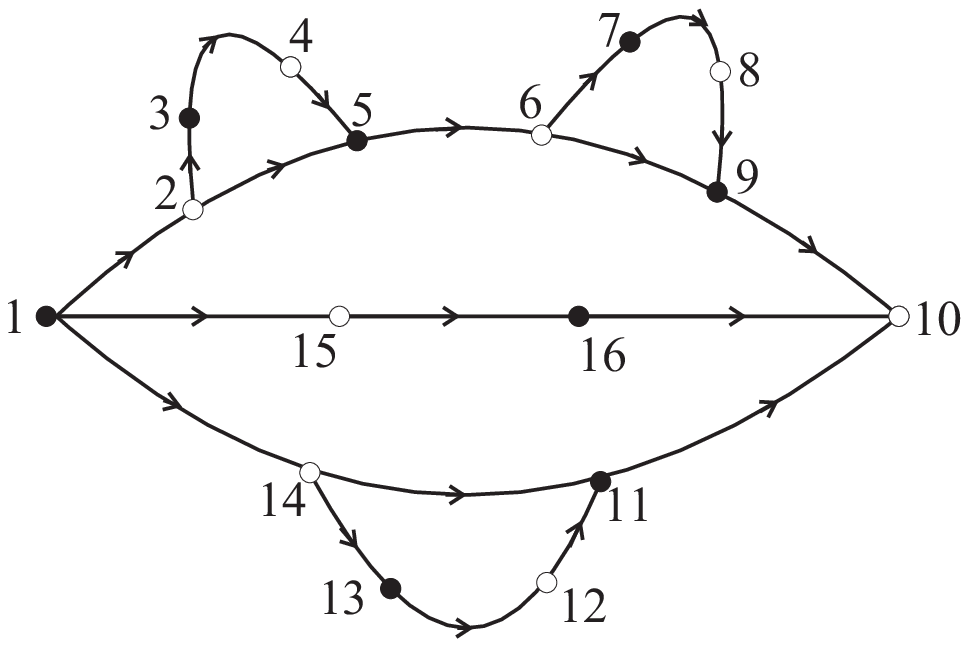}}\hspace{0.5cm}

           \caption{A graph containing no even subdivision of $K_{2,3}$ and its Pfaffian orientation.}
\end{center}
\end{figure}

\noindent{\bf Example 4.} Figure \ref{Figure11a} is a bipartite
graph $G$ containing no even subdivision of $K_{2,3}$. An
orientation $G^{e}$ obtained by Algorithm \ref{algorithm} is given
in Figure \ref{Figure11b}. Let $A(G^{e})$ be the skew adjacency
matrix of $G^{e}$. By Corollary \ref{criterion}, we have that
$\pi(G,x)=\mathrm{det}(xI-A(G^{e}))=196+1108x^{2}+2433x^{4}+2780x^{6}
+1832x^{8}+718x^{10}+ 164x^{12}+20x^{14}+x^{16}.$


\end{document}